\documentclass[10pt]{article}

\usepackage{amsmath}
\usepackage{amsthm}
\usepackage{amssymb}
\usepackage{enumitem}
\usepackage{multicol}
\usepackage{booktabs} 
\usepackage[active]{srcltx}
\usepackage{algorithm}
\usepackage{algpseudocode}
\usepackage{array}
\usepackage[dvipsnames,svgnames]{xcolor}
\usepackage{tikz}
\usetikzlibrary{positioning,arrows.meta}
\usepackage{graphicx}
\usepackage{subcaption}
\usepackage{pgfplots}
\usepackage{xcolor}
\usepackage{cancel}
\usepackage{hyperref}
\usepackage[top=35mm, bottom=25mm, left=30mm, right=30mm]{geometry}
\usepackage{comment}

\newtheorem{theorem}{Theorem}[section]

\newtheorem{proposition}[theorem]{Proposition}
\newtheorem{corollary}[theorem]{Corollary}
\newtheorem{assumption}[theorem]{Assumption}
\newtheorem{fact}[theorem]{Fact}

\newtheorem{remark}[theorem]{Remark}

\newcommand{\R}{\mathbb{R}}
\newcommand\Rinf{\overline{\mathbb{R}}}
\newcommand\Nz{\mathbb{N}_0}
\newcommand{\E}{\mathbb{E}}
\newcommand\ov[1]{\overline{#1}}
\newcommand\dom[1]{ \bs{{\rm dom}}(#1)} 
\newcommand\bs[1]{\boldsymbol{#1}}
\newcommand\prox[2]{\bs{{\rm Prox}}_{#2#1}}
\newcommand\fgam[2]{#1_{#2}}

\newcommand\fgamepsk[2]{#1_{#2}^{\varepsilon_k}}
\newcommand\dist{ \bs{{\rm dist}}} 
\newcommand{\Law}{\mathcal{L}}
\newcommand{\W}{\mathcal{W}_2}

\newcommand{\inner}[2]{\left\langle #1,#2\right\rangle}
\newcommand{\norm}[1]{\left\|#1\right\|}
\newcommand\U{\mathcal{U}}%
 \newcommand\Z{\mathcal{Z}}%
\DeclareMathOperator*{\argminold}{arg\,min}
\newcommand\argmin{\bs\argminold}%

\let\oldliminf\liminf
\renewcommand{\liminf}{\bs{\oldliminf}}
\let\oldlimsup\limsup
\renewcommand{\limsup}{\bs{\oldlimsup}}
\let\oldmax\max
\renewcommand{\max}{\bs{\oldmax}}
\let\oldmin\min
\renewcommand{\min}{\bs{\oldmin}}
\let\oldsup\sup
\renewcommand{\sup}{\bs{\oldsup}}
\let\oldinf\inf
\renewcommand{\inf}{\bs{\oldinf}}
\let\oldlim\lim
\renewcommand{\lim}{\bs{\oldlim}}

\title{{\bf Speeding Up Nonsmooth Bayesian MCMC Sampling via Inexact Proximal Unadjusted Langevin Algorithm}}
\author{
Susan Ghaderi \thanks{Department of Electrical Engineering (ESAT-STADIUS), KU Leuven, Belgium. Email: susan.ghaderi@kuleuven.be}
\and
Alireza Kabgani\thanks{Department of Mathematics, University of Antwerp, Antwerp, Belgium. Email: alireza.kabgani@uantwerp.be} 
\and
Yves Moreau\thanks{Department of Electrical Engineering (ESAT-STADIUS), KU Leuven, Belgium. Email: yves.moreau@kuleuven.be}
\and
Masoud Ahookhosh\thanks{Department of Mathematics, University of Antwerp, Antwerp, Belgium. Email: masoud.ahookhosh@uantwerp.be},
}

\begin{document}


\maketitle

\begin{abstract}
We study sampling from posterior distributions with nonsmooth composite potentials, a setting in which proximal-based Langevin methods are theoretically appealing but in practice limited to simple functions with closed-form proximal operators. We introduce iPULA for composite potentials, an inexact proximal unadjusted Langevin algorithm that replaces exact proximal steps with controlled approximations. Our approach leverages the Moreau envelope to smooth the potential, while allowing inexact evaluation of its gradient through inexact proximal computations. We establish non-asymptotic convergence guarantees for iPULA, explicitly characterizing the impact of inexactness on the sampling error and showing that the inexactness preserves convergence rates up to a quantifiable bias. We demonstrate the practical relevance of iPULA on a medical image reconstruction task, where proximal operators cannot be computed exactly. Experiments demonstrate the effectiveness of iPULA and support our theoretical results.
\noindent 

{\small}

\medskip



\end{abstract}

\section{Introduction}\label{sec:intro}
Markov chain Monte Carlo (MCMC) methods are a cornerstone of modern statistical inference, providing a practical framework for working with complex probabilistic models that are otherwise analytically intractable (see, e.g., \cite{robert2004monte}). By generating dependent samples from high-dimensional target distributions, MCMC enables the approximation of posterior distributions in Bayesian settings, even when normalizing constants are unknown. This flexibility has led to widespread adoption across diverse domains, including computational biology for phylogenetic inference, physics for simulating particle systems, machine learning for training deep generative models, and econometrics for hierarchical and time-series modeling. As data sizes continue to grow and models become increasingly complex, MCMC remains a fundamental bridge between statistical theory and practical inference, driving ongoing research into more efficient, scalable, and robust sampling algorithms.

While MCMC methods are asymptotically valid, their efficiency often degrades in high dimensions, where random-walk Metropolis exhibits slow, highly correlated, and poorly mixing behavior. As dimensionality increases, maintaining reasonable acceptance rates requires shrinking the proposal step-size, leading to inefficient traversal of the state space due to poor adaptation to the target distribution’s local structure and a high computational cost to adequately explore regions of high probability mass; see, e.g., \cite{brooks2011handbook}. This limitation motivates the use of gradient-based MCMC methods, such as those built on Langevin dynamics, which incorporate first-order derivative information to guide proposals toward more probable regions; see, e.g., \cite{dalalyan2017further,dalalyan2019user,dalalyan2022bounding,Durmus2019high,DurmusPereyra,ma2015complete}. By leveraging gradients, these methods adapt to the curvature of the target distribution, enabling larger, more informed moves that significantly improve convergence rates and sampling efficiency. Consequently, Langevin-based approaches offer a principled balance between exploration and exploitation, making them especially important for scalable inference in modern high-dimensional applications; see, e.g., \cite{dubey2016variance}.

\subsection{Related work: Nonsmooth MCMC methods and key question}\label{sec:motivation}

{\it Nonsmooth potentials} arise naturally in applications involving sparsity-inducing priors (e.g., $\ell_1$ penalties in compressed sensing and high-dimensional regression), image reconstruction with total variation regularization, nuclear-norm based low-rank matrix estimation, and Bayesian formulations of machine learning models with hinge or absolute-value losses (e.g., Bayesian trendfiltering); see, e.g., \cite{Crucinio2025optimal,DurmusPereyra,lau2022bregman,lau2022nonlog,Pereyra2016proximal,salim2019stochastic,ShuklaVatsChi2025}.
While gradient-based MCMC methods are not directly applicable in the nonsmooth setting, the need for efficient sampling in high-dimensional regimes remains a need. 
In this setting, unadjusted Langevin algorithm (ULA) and its variants can be extended either by replacing gradients with subgradients \cite{habring2024subgradient} or by introducing smooth surrogates of the potential. When coupled with the unadjusted Langevin algorithm (ULA), such smoothing strategies, including Gaussian smoothing \cite{chatterji2020langevin,nesterov2017randon}, the Moreau envelope \cite{DurmusPereyra,kabgani2025moreau,kabgani2024itsopt,kabgani2025itsdeal,klatzer2024accelerated,moreau1963pro,Pereyra2016proximal,PlanidenWang}, and forward–backward envelopes \cite{ahookhosh2021bregman,eftekhari2023forward,GhaderiSULA,stella2017forward,themelis2019acceleration}, yield well-defined stochastic dynamics that preserve key structural properties of the original objective while enabling efficient gradient-based exploration.

Owing to their computational efficiency, smoothing-based ULA methods provide a powerful and scalable framework for Bayesian inference in nonsmooth models, striking a balance between theoretical tractability and practical performance across a wide range of modern applications. The principal smoothing techniques integrated with ULA are as follows:
\begin{itemize}[leftmargin=*]
    \item {\bf Moreau envelope} is a smoothing technique that associates to a given function $\U$ a differentiable approximation obtained via infimal convolution with a scaled quadratic function, i.e.,
    \begin{equation}\label{eq:env_def}
        \fgam{\U}{\gamma}(x):=\inf_{y\in\R^n}\left\{\U(y)+\frac{1}{2\gamma}\norm{x-y}^2\right\},
    \end{equation}
    thereby preserving key properties such as convexity while it is smooth with the gradient
    \begin{equation}\label{eq:gradMoreau}
        \nabla \fgam{\U}{\gamma}(x)=\frac{1}{\gamma}\bigl(x-\prox{\U}{\gamma}(x)\bigr),
    \end{equation}
    where $\gamma>0$ and the {\it proximal operator} is given by
    \begin{equation}\label{eq:prox_def}
        \prox{\U}{\gamma}(x) :=\argmin_{y\in\R^n}\left\{\U(y)+\frac{1}{2\gamma}\norm{x-y}^2\right\}.
    \end{equation}
    \item {\bf Forward–backward envelope} (FBE) is a smoothing technique that offers an efficient way to generate a differentiable approximation of composite potentials of the form $\mathcal{U}(x)=f(x)+g(x)$, where $f$ is $L$-smooth with some $L>0$ and $g$ is simple with a closed-form proximal operator, i.e.,
    \begin{equation}\label{eq:fbe}
        \fgam{\U}{\gamma}^{f,g}(x):=\inf_{y\in\R^n}\left\{f(x)+\langle\nabla f(x),y-x\rangle+ g(y)+\frac{1}{2\gamma}\norm{x-y}^2\right\},
    \end{equation}
    which is smooth and its gradient is given by
    \begin{equation}\label{eq:gradFBE}
        \nabla \fgam{\U}{\gamma}^{f,g}(x)=\frac{1}{\gamma} \left(I-\gamma \nabla^2f(x)\right) \left(x-\prox{g}{\gamma}(x-\gamma\nabla f(x))\right),
    \end{equation}
    where $\gamma\in (0,\tfrac{1}{L}]$.
\end{itemize}

{\bf Limitations of existing smoothing techniques.} While the Moreau envelope and forward--backward envelope (FBE) preserve the minimizers of the original potential, they exhibit several limitations. For the {\it Moreau envelope}, the proximal operator admits a closed form only when the potential has a particularly simple structure, which is typically not satisfied in composite settings; as a result, its use within ULA is largely restricted to simple models. The applicability of {\it forward-backward envelope} is further constrained by more stringent requirements: (i) evaluating its gradient involves the Hessian of \(f\), which is computationally prohibitive in high dimensions; (ii) the assumption that \(f\) is \(L\)-smooth restricts the class of admissible models, excluding many problems with nonsmooth or relatively smooth data-fidelity terms; and (iii) it relies on the simplicity of \(g\) to ensure a tractable proximal operator, thereby excluding complex or nonseparable regularizers. Collectively, these limitations hinder the practicality of FBE-based ULA methods for many applications. Accordingly, a natural key question arises in the community:

``{\it How can we design fast and scalable unadjusted Langevin MCMC algorithms for nonsmooth composite potentials arising in Bayesian learning?}''

\subsection{Contribution}
To address this question, we first approximate the proximal operator in \eqref{eq:prox_def}, which in turn yields an inexact oracle for the Moreau envelope in \eqref{eq:env_def}. We then incorporate this inexact gradient into the unadjusted Langevin algorithm (ULA), resulting in an inexact proximal-based ULA scheme. The key elements of this construction are summarized below and form the main contributions of this paper:
\begin{itemize}[leftmargin=*]
    \item ({\bf Inexact oracle of Moreau envelope for ULA.}) In the composite setting, we first solve the proximal subproblem \eqref{eq:prox_def} approximately and substitute the result into \eqref{eq:gradMoreau}, thereby obtaining an inexact gradient of the Moreau envelope \eqref{eq:env_def}; see Section~\ref{sec:inexactOracle}. Notably, this inexact approach removes the need for $\U$ to admit a simple structure, in contrast to the exact Moreau and forward–backward envelope constructions discussed in Section~\ref{sec:motivation}. We then incorporate this inexact gradient into the ULA update, yielding the inexact proximal unadjusted Langevin algorithm (iPULA). To the best of our knowledge, this is the first proximal-based ULA method that leverages an inexact oracle for the Moreau envelope of composite potentials arising in Bayesian learning.
    \item ({\bf Non-asymptotic convergence analysis of iPULA.}) Assuming strong convexity of the original potential, the corresponding Moreau envelope is also strongly convex. We show that the sequence ${\E[\fgam{\U}{\gamma}(x_k) - \fgam{\U}{\gamma}(x^\star)]}{k \geq 0}$ converges linearly up to a residual constant, yielding a complexity of order $\mathcal{O}(\log(1/\varepsilon))$ to reach accuracy $\varepsilon > 0$; cf. Section~\ref{sec:iPULA}. We further establish that the sequence ${\mathcal{W}2(\Law(x_k), \pi\gamma)}{k \geq 0}$ exhibits linear convergence up to a constant, which likewise implies a complexity of order $\mathcal{O}(\log(1/\varepsilon))$; cf. Section~\ref{sec:iPULA}. Finally, numerical experiments demonstrate the promising performance of iPULA, corroborating our theoretical results; see Section~\ref{sec:experiments}.
\end{itemize}


\section{Preliminaries and notation}\label{sec:preliminaries}
Throughout the paper, 
$\R^n$ denotes  the $n$-dimensional \textit{Euclidean space} equipped
with \textit{inner product} $\inner{\cdot}{\cdot}$ and \textit{norm} $\norm{\cdot}$.  We write $\Rinf:=\R\cup\{+\infty\}$, and  denote the set of \textit{nonnegative integers} by $\Nz := \mathbb{N}\cup\{0\}$.  
For a function $h:\R^n\to\Rinf$, its \textit{effective domain} is
$\dom{h}:=\{x:h(x)<+\infty\}$.
The function $h$ is called \textit{proper} if $\dom h\neq\emptyset$.

A function $h: \mathbb{R}^{n} \rightarrow \overline{\mathbb{R}}$ with convex domain is said to be
\textit{strongly convex} on $\dom{h}$ if there exists $\sigma > 0$ such that, for all $x,y \in \dom{h}$ and all $\lambda \in [0,1]$,
 \begin{equation}\label{strong:convex}
  h(\lambda y + (1-\lambda)x) \leq \lambda h(y) + (1-\lambda) h(x) -
  \lambda (1 - \lambda) \frac{\sigma}{2} \lVert x - y \rVert^{2},
 \end{equation}

For a proper convex function $h:\R^n\to\Rinf$, the convex subdifferential of $h$ at
$x\in\R^n$ is defined by
\[
\partial h(x):=\left\{ v\in\R^n:h(y)\ge h(x)+\inner{v}{y-x}\quad \forall y\in\R^n\right\},
\]
with the convention $\partial h(x)=\emptyset$ if $x\notin\dom h$. Any vector $v\in\partial h(x)$ is called a subgradient of $h$ at $x$. The mapping
$\partial h$ is monotone, that is,
\[
    \inner{u-v}{x-y}\ge 0,  \qquad   \forall u\in\partial h(x),\; v\in\partial h(y).
\]
For any pair of probability measures $u$ and $v$ on the same parameter space, a transference plan $\xi$ between $u$ and $v$ is a joint distribution such that the marginal distributions on two sets of coordinates are $u$ and $v$, respectively. We denote by $\Gamma(u,v)$ the set of all transference plans, and define the $2$-Wasserstein distance between these two probability distributions as
\begin{align*}
    \W^2(u,v)=\inf_{\xi\in\Gamma(u,v)} \int \|x-y\|_2^2 d\xi(x,y).
\end{align*}

\section{Problem setup and Moreau envelope}\label{sec:prelem}
 Let us consider {\it sampling} from a canonical target distribution $\pi(x): \mathbb{R}^n\to \Rinf $ given by
\begin{equation}\label{eq:target}
    \pi(x):=\frac{e^{-\U(x)}}{\int_{\R^n} e^{-\U(y)}\,dy},
\end{equation}
where $\mathcal{U}:\mathbb{R}^n\to \Rinf$ is a proper, lower semicontinuous, convex function which is measurable, i.e,
\begin{equation}
\int_{\R^n} e^{-\U(x)}\,dx < +\infty.
\label{eq:integrability}
\end{equation}
Methodologies based on Bayesian inference are among the most popular methods to design a proper target distribution, called the posterior distribution, for the unknown parameters of a mathematical model by considering the prior beliefs on the parameters and the likelihood function of the given data set.  Imposing prior beliefs on the parameters allows modulating uncertainty in those parameters by accounting for prior knowledge about the problem. This is accounted for by Bayes' rule via
\begin{equation*}
    \pi(x\vert D)\propto P(D\vert x)P(x),
\end{equation*}
where $D \in \mathbb{R}^{n\times p}$ is a given data set, $x\in \mathbb{R}^p$ is the parameter to be estimated, $\pi(x\vert D)$ is the posterior distribution, $P(D\vert x)$ is the likelihood function, and $P(x)$ is the prior distribution. 
Hence, the potential function of the problem \eqref{eq:target} will be of the form ${\mathcal{U}}(x):=\log \pi(x\vert D)\propto- \log P(D\vert x)-\log P(x)$. 
For the sake of simplicity, we set here $f(x):=-\log P(D\vert x)$ and $g(x):=-\log P(x)$, which leads to composite potential functions of the form $\mathcal{U}(x) = f(x)+g(x)$.

\begin{assumption}[\textbf{Baseline assumptions}]\label{ass:baseline}
Throughout the paper, unless otherwise stated, we assume:
\begin{enumerate}[label=(\textbf{\alph*}), font=\normalfont\bfseries, leftmargin=0.7cm]
    \item $\U:\R^n\to\Rinf$ is proper, lower semicontinuous, convex, and bounded below;
    \item the integrability condition \eqref{eq:integrability} holds;
    \item the function $\U$ is $\sigma$-strongly convex for some $\sigma>0$.
\end{enumerate}
\end{assumption}

\subsection{Proximal operator and Moreau envelope}\label{sec:proxMoreau}

The following standard facts summarize the properties needed in our analysis.
\begin{fact}[\textbf{Basic properties of the Moreau envelope}] \label{fact:basic}\cite{BauschkeCombettes}
Suppose Assumption~\ref{ass:baseline} holds and let $\gamma>0$. Then,
\begin{enumerate}[label=(\textbf{\alph*}), font=\normalfont\bfseries, leftmargin=0.7cm]
    \item\label{fact:basic:a} $\dom{\fgam{\U}{\gamma}}=\R^n$ and $\fgam{\U}{\gamma}$ is finite-valued;
    \item\label{fact:basic:b} for every $x\in\R^n$, $\inf_{y\in\R^n}\U(y)\le \fgam{\U}{\gamma}(x)\le \U(x)$;
    \item\label{fact:basic:c} $\inf_{x\in\R^n}\fgam{\U}{\gamma}(x)=\inf_{x\in\R^n}\U(x)$;
    \item\label{fact:basic:d} $\prox{\U}{\gamma}(x)$ is nonempty and single-valued for every $x\in\R^n$;
    \item\label{fact:basic:e} $\fgam{\U}{\gamma}$ is convex, real-valued, and continuous;
    \item\label{fact:basic:f} $\fgam{\U}{\gamma}$ is continuously differentiable and for $x\in\R^n$,
    \[
    \nabla \fgam{\U}{\gamma}(x)=\frac{1}{\gamma}\bigl(x-\prox{\U}{\gamma}(x)\bigr);
    \]
    \item\label{fact:basic:g} $\fgam{\U}{\gamma}$ has Lipschitz continuous gradient with constant $L_\gamma=1/\gamma$, that is,
    \[
    \norm{\nabla \fgam{\U}{\gamma}(x)-\nabla \fgam{\U}{\gamma}(y)} \le L_\gamma\norm{x-y}, \qquad \forall x,y\in\R^n.
    \]
\end{enumerate}
\end{fact}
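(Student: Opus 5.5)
The plan is to treat the proximal subproblem as minimizing the function $\phi_x(y):=\U(y)+\frac{1}{2\gamma}\norm{x-y}^2$ for fixed $x\in\R^n$ and $\gamma>0$. We first prove \ref{fact:basic:d}, and then obtain the remaining items from it.

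\textbf{Existence and uniqueness of the proximal point.} By Assumption~\ref{ass:baseline}(a), $\phi_x$ is proper and lower semicontinuous. It is also strongly convex, with modulus at least $1/\gamma$. Strong convexity together with properness and lower semicontinuity gives coercivity: a convex lsc proper function has an affine minorant, and adding the quadratic makes $\phi_x$ grow like $\norm{y}^2$. A coercive lsc function attains its minimum, and strict convexity makes the minimizer unique. This proves \ref{fact:basic:d}. It also shows that $\fgam{\U}{\gamma}(x)=\phi_x(\prox{\U}{\gamma}(x))$ is finite, which gives \ref{fact:basic:a}.

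\textbf{Items \ref{fact:basic:b}, \ref{fact:basic:c} and \ref{fact:basic:e}.} For the upper bound in \ref{fact:basic:b}, take $y=x$ in the infimum. For the lower bound, drop the nonnegative quadratic term. Item \ref{fact:basic:c} follows by taking infima in \ref{fact:basic:b}. If $\inf\U$ is approached along a sequence $y_k$, then $\fgam{\U}{\gamma}(y_k)\le\U(y_k)$, so the two infima coincide. For convexity in \ref{fact:basic:e}, note that $(x,y)\mapsto \U(y)+\frac{1}{2\gamma}\norm{x-y}^2$ is jointly convex. Partial minimization of a jointly convex function over $y$ preserves convexity. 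A finite convex function on $\R^n$ is continuous.

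\textbf{Differentiability and the gradient formula \ref{fact:basic:f}.} Let $p=\prox{\U}{\gamma}(x)$. The optimality condition is $(x-p)/\gamma\in\partial\U(p)$. For any $x'$, the definition of the infimum gives
\[
\fgam{\U}{\gamma}(x')-\fgam{\U}{\gamma}(x)\le \frac{1}{2\gamma}\bigl(\norm{x'-p}^2-\norm{x-p}^2\bigr)=\inner{\tfrac{x-p}{\gamma}}{x'-x}+\tfrac{1}{2\gamma}\norm{x'-x}^2 .
\]
Exchanging the roles of $x$ and $x'$, with $p'=\prox{\U}{\gamma}(x')$, gives a lower bound of the form $\inner{(x-p)/\gamma}{x'-x}-o(\norm{x'-x})$. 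This step requires continuity of the prox.

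\textbf{Lipschitz estimate \ref{fact:basic:g}.} Continuity of the prox comes from firm nonexpansiveness. Write the optimality conditions at $x$ and $x'$ and apply monotonicity of $\partial\U$. This yields
\[
\norm{p-p'}^2\le\inner{p-p'}{x-x'}.
\]
Hence $x\mapsto x-\prox{\U}{\gamma}(x)$ is also firmly nonexpansive, so in particular it is $1$-Lipschitz. Dividing by $\gamma$ gives \ref{fact:basic:g} with $L_\gamma=1/\gamma$, and continuity of the gradient then completes \ref{fact:basic:f}.

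\textbf{Main obstacle.} The delicate point is the differentiability step. Once firm nonexpansiveness is established, the two-sided estimate closes. So the key order is to prove the monotonicity inequality first and only then derive the gradient formula.
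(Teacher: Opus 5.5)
Your proposal is correct. It is essentially the standard textbook argument: a unique proximal point from strong convexity and coercivity of $\U+\frac{1}{2\gamma}\norm{x-\cdot}^2$, then the gradient formula and the $1/\gamma$-Lipschitz bound from firm nonexpansiveness of $\prox{\U}{\gamma}$ and $I-\prox{\U}{\gamma}$; the paper does not prove the statement and simply imports it from Bauschke--Combettes. One simplification: the differentiability step is not actually delicate, because the subgradient inequality $\U(p')\ge\U(p)+\inner{(x-p)/\gamma}{p'-p}$ (with $p'=\prox{\U}{\gamma}(x')$) already gives $\fgam{\U}{\gamma}(x')-\fgam{\U}{\gamma}(x)\ge\inner{(x-p)/\gamma}{x'-x}$, which together with your upper bound yields $0\le\fgam{\U}{\gamma}(x')-\fgam{\U}{\gamma}(x)-\inner{(x-p)/\gamma}{x'-x}\le\frac{1}{2\gamma}\norm{x'-x}^2$ without first needing continuity of the prox.
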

\begin{fact}[\textbf{Strong convexity of envelope}]\label{fact:strong_convex}\cite[Lemma~2.23]{PlanidenWang}
Let $\U:\R^n\to\Rinf$ be proper, lower semicontinuous, and convex, and let $\gamma>0$. Then $\U$ is $\sigma$-strongly convex if and only if $\fgam{\U}{\gamma}$ is $\frac{\sigma}{1+\gamma\sigma}$-strongly convex.
\end{fact}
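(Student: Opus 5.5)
The plan is to prove both directions of Fact~\ref{fact:strong_convex} through Fenchel conjugation, since the Moreau envelope is an infimal convolution: $\fgam{\U}{\gamma}=\U\,\square\,\tfrac{1}{2\gamma}\norm{\cdot}^2$. Because $\U$ is proper, lower semicontinuous and convex, and the quadratic is finite everywhere, the conjugate of the infimal convolution is the sum of the conjugates:
\[
(\fgam{\U}{\gamma})^*=\U^*+\tfrac{\gamma}{2}\norm{\cdot}^2 .
\]
By Fact~\ref{fact:basic}\ref{fact:basic:e}, $\fgam{\U}{\gamma}$ is convex and continuous, so both $\U$ and $\fgam{\U}{\gamma}$ equal their biconjugates. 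This lets us move freely between a function and its conjugate.

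The key tool is the standard duality: a proper lsc convex $h$ is $\mu$-strongly convex if and only if $h^*$ is differentiable with $\tfrac1\mu$-Lipschitz gradient. Equivalently, $h^*-\tfrac{1}{2\mu}\norm{\cdot}^2$ is concave, which is the same as saying $\tfrac{1}{2\mu}\norm{\cdot}^2-h^*$ is convex. For the proof, I would use this quadratic-upper-bound form and treat the duality as standard; it is in Bauschke--Combettes, which the paper already cites.

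\textbf{Forward direction.} Assume $\U$ is $\sigma$-strongly convex. Then $\U^*$ is $\tfrac1\sigma$-smooth, meaning $\tfrac{1}{2\sigma}\norm{\cdot}^2-\U^*$ is convex. Consequently $(\fgam{\U}{\gamma})^*=\U^*+\tfrac\gamma2\norm{\cdot}^2$ is differentiable, and
\[
\tfrac{1+\gamma\sigma}{2\sigma}\norm{\cdot}^2-(\fgam{\U}{\gamma})^*=\tfrac{1}{2\sigma}\norm{\cdot}^2-\U^*
\]
is convex. So $(\fgam{\U}{\gamma})^*$ has $\tfrac{1+\gamma\sigma}{\sigma}$-Lipschitz gradient. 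Applying the duality back, $\fgam{\U}{\gamma}$ is $\tfrac{\sigma}{1+\gamma\sigma}$-strongly convex.

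\textbf{Converse direction.} Assume $\fgam{\U}{\gamma}$ is $\mu$-strongly convex with $\mu=\tfrac{\sigma}{1+\gamma\sigma}$, so $\tfrac1\mu=\tfrac1\sigma+\gamma$. Then $(\fgam{\U}{\gamma})^*$ is differentiable and $\tfrac{1}{2\mu}\norm{\cdot}^2-(\fgam{\U}{\gamma})^*$ is convex. Subtracting $\tfrac\gamma2\norm{\cdot}^2$ from both terms shows that $\U^*=(\fgam{\U}{\gamma})^*-\tfrac\gamma2\norm{\cdot}^2$ is differentiable and that $\tfrac{1}{2\sigma}\norm{\cdot}^2-\U^*$ is convex. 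So $\U^*$ has $\tfrac1\sigma$-Lipschitz gradient, and since $\U=\U^{**}$, $\U$ is $\sigma$-strongly convex.

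\textbf{Main obstacle.} The delicate step is the duality between strong convexity and smoothness in the direction from "$h^*-\tfrac{1}{2\mu}\norm{\cdot}^2$ concave" to "$\nabla h^*$ is Lipschitz". This needs the Baillon--Haddad-type equivalence for convex functions: for a convex differentiable function, a quadratic upper bound is equivalent to a Lipschitz gradient, and differentiability follows because $h^*$ is squeezed between an affine minorant and a quadratic majorant at each point.

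A second point to check is that $\U^*$ is finite everywhere, which differentiability requires. This follows from strong convexity in the forward direction. In the converse direction it follows from $\U^*=(\fgam{\U}{\gamma})^*-\tfrac\gamma2\norm{\cdot}^2$, with $(\fgam{\U}{\gamma})^*$ finite because $\fgam{\U}{\gamma}$ is strongly convex.
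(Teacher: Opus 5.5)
Your proof is correct. The paper does not prove this statement at all; it imports it from \cite[Lemma~2.23]{PlanidenWang}. So there is no in-paper argument to compare against, and your conjugate-duality argument serves as a self-contained proof. Two small remarks.

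First, the step you flag as the main obstacle can be avoided. Finiteness and differentiability of the relevant conjugate are inherited from adding or subtracting $\tfrac{\gamma}{2}\norm{\cdot}^2$. If you use the strong-convexity duality directly in its quadratic form, you never need Lipschitz gradients:
\begin{align*}
\U-\tfrac{\sigma}{2}\norm{\cdot}^2\ \text{convex}
&\iff \tfrac{1}{2\sigma}\norm{\cdot}^2-\U^*\ \text{convex}\\
&\iff \tfrac{1+\gamma\sigma}{2\sigma}\norm{\cdot}^2-(\U_\gamma)^*\ \text{convex}\\
&\iff \U_\gamma-\tfrac{\sigma}{2(1+\gamma\sigma)}\norm{\cdot}^2\ \text{convex}.
\end{align*}
The middle two functions are literally equal, because $(\U_\gamma)^*=\U^*+\tfrac{\gamma}{2}\norm{\cdot}^2$; this identity needs only properness of $\U$. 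The outer equivalences are the standard fact that a proper lsc convex $h$ is $\mu$-strongly convex if and only if $h^*$ is real-valued and $\tfrac{1}{2\mu}\norm{\cdot}^2-h^*$ is convex.

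Second, in the converse direction you should not cite Fact~\ref{fact:basic}\ref{fact:basic:e}. The paper states that fact under Assumption~\ref{ass:baseline}, which already includes strong convexity of $\U$, i.e.\ the conclusion you are trying to prove. Convexity and continuity of $\U_\gamma$ hold for every proper lsc convex $\U$, so cite that general fact instead.

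Note also that the paper only uses the forward implication: in the Polyak--\L ojasiewicz step of Theorem~\ref{thm:fixed_gap} and the contraction step of Theorem~\ref{thm:fixed_w2}. That direction has an elementary proof without duality. Write $\U=\phi+\tfrac{\sigma}{2}\norm{\cdot}^2$ with $\phi$ proper, lsc and convex, and complete the square in \eqref{eq:env_def}. This gives
\[
\U_\gamma(x)=\frac{\sigma}{2(1+\gamma\sigma)}\norm{x}^2+\phi_{\gamma/(1+\gamma\sigma)}\Bigl(\frac{x}{1+\gamma\sigma}\Bigr),
\]
and the second term is convex. The converse is where conjugate duality, or an equivalent Baillon--Haddad-type fact, is really needed. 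Your symmetric argument handles both directions at once.
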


\subsection{Inexact first-order oracle for Moreau envelope}\label{sec:inexactOracle}

The gradient formula in Fact~\ref{fact:basic} shows that evaluating $\nabla \fgam{\U}{\gamma}(x)$ requires computing the exact proximal point $\prox{\U}{\gamma}(x)$. In many composite Bayesian models, however, the proximal operator of the full potential $\U=f+g$ is not available in closed form. This motivates the use of an approximate proximal point and, consequently, an inexact first-order oracle for the Moreau envelope.

Let $x_k\in\R^n$ be the current point and let $\varepsilon_k>0$ be a prescribed lower-level tolerance. We denote by
$y_k^{\varepsilon_k}$ an approximate solution of the proximal subproblem \eqref{eq:env_def}.
Using $y_k^{\varepsilon_k}$ in place of the exact proximal point gives the inexact Moreau-gradient oracle
\begin{equation}\label{eq:approx_grad}
\nabla \fgamepsk{\U}{\gamma}(x_k):=\frac{1}{\gamma}(x_k-y_k^{\varepsilon_k}).
\end{equation}
We emphasize that $\nabla \fgamepsk{\U}{\gamma}(x_k)$ is not, in general, the exact gradient of any explicitly defined smooth function. It is an inexact oracle for the exact Moreau-envelope gradient.
The induced gradient error is defined by
\begin{equation}\label{eq:error_def}
e_k:=\nabla \fgamepsk{\U}{\gamma}(x_k)-\nabla \fgam{\U}{\gamma}(x_k),
\end{equation}
and we set $\delta_k:=\norm{e_k}$.
The quantity $\delta_k$ measures the error propagated from the lower-level proximal solve to the upper-level Langevin update. It is useful for the theoretical analysis, but it is not directly computable in practice because it requires knowledge of the exact proximal point $\prox{\U}{\gamma}(x_k)$.

We therefore introduce a computable residual for the proximal subproblem. For an approximate proximal point $y_k^{\varepsilon_k}\in\dom{\partial \U}$, define
\begin{equation}\label{eq:residual_def}
r_k :=\dist\left(0,\,\partial \U(y_k^{\varepsilon_k})+ \frac{1}{\gamma}\left(y_k^{\varepsilon_k}-x_k\right)\right).
\end{equation}
This residual is the distance from zero to the first-order optimality residual of the proximal subproblem. In particular, the exact proximal point $y_k=\prox{\U}{\gamma}(x_k)$ satisfies 
\[
0\in\partial \U(y_k) +\frac{1}{\gamma}(y_k-x_k),
\]
and hence has zero residual.

\begin{proposition}[\textbf{Residual-to-gradient error bound}]\label{prop:bridge}
Let $\U:\R^n\to\Rinf$ be proper, lower semicontinuous, and convex, and let $\gamma>0$. For a given $x_k\in\R^n$, let $y_k=\prox{\U}{\gamma}(x_k)$ and let
$y_k^{\varepsilon_k}\in\dom{\partial \U}$ be an approximate proximal point with residual $r_k$ defined by \eqref{eq:residual_def}. Then
\begin{equation}\label{eq:err_bound}
\delta_k = \norm{\nabla \fgamepsk{\U}{\gamma}(x_k)-\nabla \fgam{\U}{\gamma}(x_k)} \le r_k
\end{equation}
Consequently, any lower-level stopping rule enforcing $r_k\le \varepsilon_k$ also enforces $\delta_k\le \varepsilon_k$.
\end{proposition}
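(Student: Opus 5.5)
The plan is to reduce the claimed bound to monotonicity of the convex subdifferential $\partial\U$. First, by \eqref{eq:approx_grad} and Fact~\ref{fact:basic}\ref{fact:basic:f}, the error is
\[
e_k=\frac{1}{\gamma}(x_k-y_k^{\varepsilon_k})-\frac{1}{\gamma}(x_k-y_k)=\frac{1}{\gamma}(y_k-y_k^{\varepsilon_k}),
\]
so $\delta_k=\frac1\gamma\norm{y_k^{\varepsilon_k}-y_k}$. The task is therefore to bound the distance between the approximate and exact proximal points by $\gamma r_k$.

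Next I will use the residual. Since $y_k^{\varepsilon_k}\in\dom{\partial\U}$, the set $\partial\U(y_k^{\varepsilon_k})$ is nonempty, closed and convex. Hence the distance in \eqref{eq:residual_def} is attained: there is $v\in\partial\U(y_k^{\varepsilon_k})$ with $w:=v+\frac1\gamma(y_k^{\varepsilon_k}-x_k)$ satisfying $\norm{w}=r_k$. If one prefers to avoid attainment, taking an infimizing sequence gives the same conclusion after a limit. Rearranging, $w-\frac1\gamma(y_k^{\varepsilon_k}-x_k)\in\partial\U(y_k^{\varepsilon_k})$. The exact optimality condition gives $-\frac1\gamma(y_k-x_k)\in\partial\U(y_k)$.

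Applying monotonicity of $\partial\U$ to these two subgradients yields
\[
\inner{w-\tfrac1\gamma(y_k^{\varepsilon_k}-y_k)}{y_k^{\varepsilon_k}-y_k}\ge 0,
\]
that is, $\frac1\gamma\norm{y_k^{\varepsilon_k}-y_k}^2\le\inner{w}{y_k^{\varepsilon_k}-y_k}\le r_k\norm{y_k^{\varepsilon_k}-y_k}$ by Cauchy--Schwarz. If $y_k^{\varepsilon_k}=y_k$ there is nothing to prove. Otherwise, dividing by $\norm{y_k^{\varepsilon_k}-y_k}$ gives $\norm{y_k^{\varepsilon_k}-y_k}\le\gamma r_k$, hence $\delta_k\le r_k$. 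The consequence $\delta_k\le\varepsilon_k$ whenever $r_k\le\varepsilon_k$ is then immediate.

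I expect no serious obstacle. The only delicate points are the attainment of the distance, handled by closedness of $\partial\U$, and keeping the sign conventions straight in the monotonicity inequality. In effect, this argument is the statement that $\prox{\U}{\gamma}=(I+\gamma\partial\U)^{-1}$ is nonexpansive, and that fact could be cited instead.
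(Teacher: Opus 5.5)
Your proposal is correct and follows essentially the same route as the paper: subtract the exact and inexact optimality conditions, invoke monotonicity of $\partial\U$, apply Cauchy--Schwarz, and divide by $\norm{y_k^{\varepsilon_k}-y_k}$. The only difference is cosmetic: you secure attainment of the distance via closedness and convexity of $\partial\U(y_k^{\varepsilon_k})$, whereas the paper handles possible non-attainment with an approximation argument letting $\rho\downarrow 0$.
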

\begin{proof}
Fix $x_k\in\R^n$ and let $y_k:=\prox{\U}{\gamma}(x_k)$ be the exact proximal point. Since $\U$ is proper, lower semicontinuous, and convex, the
proximal subproblem is strongly convex and therefore admits a unique minimizer. Its first-order optimality condition gives
\[
0\in\partial \U(y_k)+\frac{1}{\gamma}(y_k-x_k).
\]
Hence, there exists $v_k\in\partial \U(y_k)$ such that
\begin{equation}\label{eq:exact_opt_proof}
v_k+\frac{1}{\gamma}(y_k-x_k)=0.
\end{equation}
By the definition of the residual $r_k$, and since $y_k^{\varepsilon_k}\in\dom{\partial \U}$, there exists $u_k\in\partial \U(y_k^{\varepsilon_k})$ such that
\begin{equation}\label{eq:inexact_res_proof}
\zeta_k:=u_k+\frac{1}{\gamma}(y_k^{\varepsilon_k}-x_k),\qquad \norm{\zeta_k}=r_k.
\end{equation}
If the distance in \eqref{eq:residual_def} is not attained, the same argument applies with $\norm{\zeta_k}\le r_k+\rho$ for arbitrary $\rho>0$, followed by letting $\rho\downarrow0$. 

Subtracting \eqref{eq:exact_opt_proof} from \eqref{eq:inexact_res_proof} yields
\[
u_k-v_k+\frac{1}{\gamma}(y_k^{\varepsilon_k}-y_k)=\zeta_k.
\]
Taking the inner product with $y_k^{\varepsilon_k}-y_k$ gives
\[
\inner{u_k-v_k}{y_k^{\varepsilon_k}-y_k}+\frac{1}{\gamma}\norm{y_k^{\varepsilon_k}-y_k}^2 = \inner{\zeta_k}{y_k^{\varepsilon_k}-y_k}.
\]
Since $\U$ is convex, its subdifferential $\partial \U$ is monotone. Therefore,
\[
\inner{u_k-v_k}{y_k^{\varepsilon_k}-y_k}\ge 0.
\]
It follows that
\[
\frac{1}{\gamma}\norm{y_k^{\varepsilon_k}-y_k}^2\le \inner{\zeta_k}{y_k^{\varepsilon_k}-y_k} \le\norm{\zeta_k}\,\norm{y_k^{\varepsilon_k}-y_k}.
\]
If $y_k^{\varepsilon_k}=y_k$, the desired inequality is immediate. Otherwise, dividing by
$\norm{y_k^{\varepsilon_k}-y_k}$ gives
\[
\frac{1}{\gamma} \norm{y_k^{\varepsilon_k}-y_k} \le \norm{\zeta_k}  = r_k.
\]
Finally, by the definitions of the exact and inexact Moreau-gradient oracles,
\[
\nabla \fgamepsk{\U}{\gamma}(x_k)   -  \nabla \fgam{\U}{\gamma}(x_k)  = \frac{1}{\gamma} \left(y_k-y_k^{\varepsilon_k}\right).
\]
Thus,
\[
\delta_k =\norm{ \nabla \fgamepsk{\U}{\gamma}(x_k)-\nabla \fgam{\U}{\gamma}(x_k)}= \frac{1}{\gamma} \norm{y_k^{\varepsilon_k}-y_k} \le r_k.
\]
This proves the claim.
\end{proof}

\begin{remark}[\textbf{Tolerance schedules}]\label{rem:schedules}
Proposition~\ref{prop:bridge} converts computable lower-level stopping rules into
upper-level gradient-error bounds. Common choices include:
\begin{enumerate}[label=(\textbf{\alph*}), font=\normalfont\bfseries, leftmargin=0.7cm]
    \item fixed tolerance: $\varepsilon_k\equiv \varepsilon$;
    \item step-size-matched tolerance: $\varepsilon_k=c\sqrt{\eta}$;
    \item decaying tolerance: $\varepsilon_k=c(k+1)^{-\alpha}$ for some $\alpha>0$;
    \item relative tolerance: $\varepsilon_k=c\min\left\{1,\norm{\nabla \fgamepsk{\U}{\gamma}(x_k)}\right\}$.
\end{enumerate}
\end{remark}
The fixed tolerance leads to a persistent inexactness floor in the upper-level analysis. Step-size-matched tolerances are useful when the proximal error is balanced with the Euler--Maruyama discretization error. Decaying tolerances are appropriate when one wants the inexact oracle error to vanish asymptotically. The relative tolerance adapts the lower-level accuracy to the current magnitude of the inexact Moreau-gradient oracle: it permits coarser proximal solves when the chain is far from stationarity and requires increasingly accurate solves as the oracle norm becomes small.


\section{Inexact proximal unadjusted Langevin algorithm}\label{sec:iPULA}
{\it Markov chain Monte Carlo} (MCMC) methods enable sampling from high-dimensional, complex distributions by constructing a Markov chain whose stationary distribution is the target density, $\pi(x)$. Under standard regularity conditions, the distribution of the generated samples converges to this stationary regime as the chain evolves. To improve efficiency and convergence speed, MCMC algorithms can be augmented with gradient-based information; however, the composite potential function $\U$ of \eqref{eq:target} is usually nonsmooth. As such, we first apply a smoothing technique based on Moreau envelope \eqref{eq:env_def} and then replace $\U$ with $\fgam{\U}{\gamma}$ in \eqref{eq:target}, leading to
\begin{equation}\label{eq:smoothed_target}
    \pi_\gamma(x):=\frac{e^{-\U_\gamma(x)}}{\int_{\R^n} e^{-\U_\gamma(y)}\,dy}.
\end{equation}
The (overdamped) Langevin dynamics for target distribution \eqref{eq:smoothed_target} is a continuous-time stochastic process $(x_t)_{t\geq 0}$ in $\R^n$ that evolves the stochastic differential equation 
\begin{equation}\label{eq:langDiff}
    d x_t = -\nabla \U_\gamma(x_t) dt+\sqrt{2}~d\mathcal{B}_t,
\end{equation}
where $\{\mathcal{B}_t\}_{t\geq 0}$ is a $n$-dimensional Brownian motion. It has $\pi(x)$ as its stationary distribution, and its formulation allows sampling methods to use discretization and gradient-based techniques.
Applying the Euler-Maruyama discretization scheme to the stochastic differential equation (SDE) \eqref{eq:langDiff} leads to the discrete-time Markov chain $\{x_k\}$ given by
\begin{equation}\label{eq:exact_chain}
    x_{k+1}=x_k-\eta \nabla \U_\gamma(x_k)+\sqrt{2\eta} \mathcal{Z}_k,\quad k \in \mathbb{N},
\end{equation}
where $\eta>0$ is the step-size, and $\{\mathcal{Z}_k\}_{k\geq 0}$ 
is a sequence of independent and identically distributed (i.i.d.) standard $n$-dimensional Gaussian random variables. Considering the composite form of the potential function $\U$, the proximal term \eqref{eq:prox_def} can only be computed inexactly, which consequently leads to the inexact gradient \eqref{eq:approx_grad}. As a result, \eqref{eq:exact_chain} can be replaced by
\begin{equation}\label{eq:exact_chain1}
    x_{k+1}=x_k-\eta \nabla \fgamepsk{\U}{\gamma}(x_k)+\sqrt{2\eta} \mathcal{Z}_k,\quad k \in \Nz,
\end{equation}
which leads to the next inexact proximal-based unadjusted Langevin algorithm.

\begin{algorithm}[H]
\begin{algorithmic}[1]
\caption{iPULA (Inexact Proximal Unadjusted Langevin Algorithm)}\label{alg:SULA}
\State \textbf{Initialization} initial point $x_{0}\in\mathbb{R}^{n}$, $\gamma>0$,  $\eta\in (0,1/L_\gamma)$, tolerance sequence $\{\varepsilon_k\}_{k\in\Nz}$;
\While{stopping criteria do not hold}
\State  Compute the inexact proximal term $y_k^{\varepsilon_k}$;
\State Draw $\mathcal{Z}_{k}\sim \mathcal N(0,I_n)$ independently;
\State Set $x_{k+1}= \left(1-\tfrac{\eta}{\gamma}\right)x_k +\tfrac{\eta}{\gamma} y_k^{\varepsilon_k}+\sqrt{2\eta}\mathcal{Z}_{k}$;
\State $k=k+1$;
\EndWhile
\end{algorithmic}
\end{algorithm}

We first establish an objective-gap bound for iPULA. Notably, when the inexact Moreau-gradient error is uniformly controlled in mean square, the expected Moreau-envelope gap decreases geometrically up to an additive bias induced by both the Langevin noise and the inexact proximal computation.

\begin{theorem}[\textbf{Fixed-tolerance objective-gap bound}]\label{thm:fixed_gap}
Suppose Assumption~\ref{ass:baseline} holds. 
Let $\{x_k\}_{k\in\Nz}$ be generated by the inexact chain \eqref{eq:exact_chain1}
with step-size $\eta\in(0,\tfrac{1}{2L_\gamma})$, where $L_\gamma$ is given in Fact~\ref{fact:basic}~\ref{fact:basic:g}.
Assume further that there exists $\delta\ge 0$ such that
\[
\E\left[\delta_k^2\,\middle|\,x_k\right]\le \delta^2,
\qquad \forall k\in\Nz.
\]
Then, for any $x^\star\in\argmin \fgam{\U}{\gamma}$,
\begin{equation}
\E\left[\fgam{\U}{\gamma}(x_{k+1})-\fgam{\U}{\gamma}(x^\star)\right]\le
\rho~\E\left[\fgam{\U}{\gamma}(x_k)-\fgam{\U}{\gamma}(x^\star)\right]
+\eta\Bigl(\frac12+L_\gamma\eta\Bigr)\delta^2+\eta L_\gamma n,
\label{eq:fixed_gap_recur}
\end{equation}
with 
\begin{equation}\label{eq:rho_def}
\rho:=1-m_\gamma\eta(1-2L_\gamma\eta)\in(0,1),
\end{equation}
and $m_\gamma=\frac{\sigma}{1+\gamma\sigma}$ as given in Fact~\ref{fact:strong_convex}. Consequently, for every $k\in\Nz$,
\begin{equation}
\E\left[\fgam{\U}{\gamma}(x_k)-\fgam{\U}{\gamma}(x^\star)\right]\le
\rho^k \E\left[\fgam{\U}{\gamma}(x_0)-\fgam{\U}{\gamma}(x^\star)\right]
+\frac{\eta(\frac12+L_\gamma\eta)\delta^2+\eta L_\gamma n}{1-\rho}.
\label{eq:fixed_gap_bound}
\end{equation}
\end{theorem}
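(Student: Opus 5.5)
We will follow the standard descent-lemma argument for noisy gradient descent on the $L_\gamma$-smooth, $m_\gamma$-strongly convex function $\fgam{\U}{\gamma}$, treating both the oracle error and the Gaussian noise as perturbations.

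Write $g_k:=\nabla \fgam{\U}{\gamma}(x_k)$. The update \eqref{eq:exact_chain1} then reads $x_{k+1}=x_k-\eta(g_k+e_k)+\sqrt{2\eta}\,\mathcal{Z}_k$.

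\textbf{Step 1: descent lemma.} By Fact~\ref{fact:basic}~\ref{fact:basic:g},
\[
\fgam{\U}{\gamma}(x_{k+1})\le \fgam{\U}{\gamma}(x_k)+\inner{g_k}{x_{k+1}-x_k}+\frac{L_\gamma}{2}\norm{x_{k+1}-x_k}^2 .
\]
We take the conditional expectation given $x_k$. We assume $\mathcal{Z}_k$ is independent of $(x_k,e_k)$, since $y_k^{\varepsilon_k}$ is computed before $\mathcal{Z}_k$ is drawn. Then the cross terms with $\mathcal{Z}_k$ vanish and $\E\norm{\mathcal{Z}_k}^2=n$. This gives
\[
\E[\fgam{\U}{\gamma}(x_{k+1})\mid x_k]\le \fgam{\U}{\gamma}(x_k)-\eta\norm{g_k}^2-\eta\,\E[\inner{g_k}{e_k}\mid x_k]+\frac{L_\gamma\eta^2}{2}\E[\norm{g_k+e_k}^2\mid x_k]+L_\gamma\eta n .
\]

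\textbf{Step 2: control the error terms.} We use Young's inequality in the form $-\inner{g_k}{e_k}\le \tfrac12\norm{g_k}^2+\tfrac12\norm{e_k}^2$, together with $\norm{g_k+e_k}^2\le 2\norm{g_k}^2+2\norm{e_k}^2$. After substituting $\E[\delta_k^2\mid x_k]\le\delta^2$, the gradient coefficient becomes $-\eta(\tfrac12-L_\gamma\eta)\norm{g_k}^2=-\tfrac{\eta}{2}(1-2L_\gamma\eta)\norm{g_k}^2$. The error coefficient becomes $\eta(\tfrac12+L_\gamma\eta)\delta^2$, which matches the bias term in \eqref{eq:fixed_gap_recur}. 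The condition $\eta<1/(2L_\gamma)$ is exactly what makes the gradient coefficient negative.

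\textbf{Step 3: Polyak--\L{}ojasiewicz inequality.} By Fact~\ref{fact:strong_convex}, $\fgam{\U}{\gamma}$ is $m_\gamma$-strongly convex. Hence $\norm{g_k}^2\ge 2m_\gamma(\fgam{\U}{\gamma}(x_k)-\fgam{\U}{\gamma}(x^\star))$. Substituting this and subtracting $\fgam{\U}{\gamma}(x^\star)$ gives the contraction factor $1-m_\gamma\eta(1-2L_\gamma\eta)=\rho$. Taking total expectation yields \eqref{eq:fixed_gap_recur}.

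To check $\rho\in(0,1)$, note that $m_\gamma\le 1/\gamma=L_\gamma$, so $m_\gamma\eta(1-2L_\gamma\eta)\le L_\gamma\eta<\tfrac12$. This expression is positive because $\eta<1/(2L_\gamma)$.

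\textbf{Step 4: unroll.} Iterating the recursion and summing the geometric series $\sum_{j<k}\rho^j\le 1/(1-\rho)$ gives \eqref{eq:fixed_gap_bound}.

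\textbf{Main obstacle.} The only delicate point is the bookkeeping in Step 2. The Young splitting must be chosen so that the coefficients come out exactly as $\tfrac12+L_\gamma\eta$ and $1-2L_\gamma\eta$, and not, say, with a $\tfrac{1}{2}$ factor lost. The independence of $\mathcal{Z}_k$ from $e_k$ must also be stated explicitly, since $e_k$ may itself be random through the inner solver.
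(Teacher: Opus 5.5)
Your proposal is correct and follows the paper's proof essentially step for step: the descent lemma for the $L_\gamma$-smooth envelope, conditional expectation that kills the noise cross terms, the same Young splitting giving the coefficients $\tfrac12-L_\gamma\eta$ and $\tfrac12+L_\gamma\eta$, the Polyak--\L{}ojasiewicz inequality from $m_\gamma$-strong convexity, and unrolling the recursion. Your two additions make explicit what the paper leaves implicit: you check $\rho\in(0,1)$ via $m_\gamma<1/\gamma=L_\gamma$, and you require $\mathcal{Z}_k$ to be independent of $(x_k,e_k)$, whereas the paper simply treats the inexact gradient as deterministic given $x_k$.
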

\begin{proof}
From $L_\gamma$-smoothness of $\fgam{\U}{\gamma}$ and \eqref{eq:exact_chain1}, for every $k\in\Nz$,
\begin{align*}
\fgam{\U}{\gamma}(x_{k+1})&\le \fgam{\U}{\gamma}(x_k)+\inner{\nabla \fgam{\U}{\gamma}(x_k)}{x_{k+1}-x_k}
+\frac{L_\gamma}{2}\norm{x_{k+1}-x_k}^2
\\&=\fgam{\U}{\gamma}(x_k)+\inner{\nabla \fgam{\U}{\gamma}(x_k)}{-\eta \nabla \fgamepsk{\U}{\gamma}(x_k)+\sqrt{2\eta}\,\Z_{k+1}}
+\frac{L_\gamma}{2}\norm{-\eta \nabla \fgamepsk{\U}{\gamma}(x_k)+\sqrt{2\eta}\,\Z_{k+1}}^2
\end{align*}
We now take conditional expectation with respect to $x_k$. Since $x_k$ is fixed under this conditioning, both $\fgam{\U}{\gamma}(x_k)$ and $\nabla \fgamepsk{\U}{\gamma}(x_k)$ are deterministic, and only $\Z_{k+1}$ remains random. Therefore
\begin{align*}
\E[\fgam{\U}{\gamma}(x_{k+1})\mid x_k]&\le \fgam{\U}{\gamma}(x_k)
+\E\left[\inner{\nabla\fgam{\U}{\gamma}(x_k)}{-\eta \nabla \fgamepsk{\U}{\gamma}(x_k)+\sqrt{2\eta}\,\Z_{k+1}}\middle|x_k\right]
\\&~~~~+\frac{L_\gamma}{2}\E\left[\norm{-\eta \nabla \fgamepsk{\U}{\gamma}(x_k)+\sqrt{2\eta}\,\Z_{k+1}}^2\middle|x_k\right].
\end{align*}
For the linear term, it holds that
\begin{align*}
\E\left[\inner{\nabla\fgam{\U}{\gamma}(x_k)}{-\eta \nabla \fgamepsk{\U}{\gamma}(x_k)+\sqrt{2\eta}\,\Z_{k+1}}\middle|x_k\right]&=
-\eta \inner{\nabla\fgam{\U}{\gamma}(x_k)}{\nabla \fgamepsk{\U}{\gamma}(x_k)}
\\&~~~~~+\sqrt{2\eta}\,\inner{\nabla\fgam{\U}{\gamma}(x_k)}{\E[\Z_{k+1}\mid x_k]}.
\end{align*}
Since $\E[\Z_{k+1}\mid x_k]=0$, this reduces to
\[
\E\left[\inner{\nabla\fgam{\U}{\gamma}(x_k)}{-\eta \nabla \fgamepsk{\U}{\gamma}(x_k)+\sqrt{2\eta}\,\Z_{k+1}}\middle|x_k\right]
=-\eta \inner{\nabla\fgam{\U}{\gamma}(x_k)}{\nabla \fgamepsk{\U}{\gamma}(x_k)}.
\]
For the quadratic term, expanding the square and taking conditional expectation gives
\begin{align*}
\E\left[\norm{-\eta \nabla \fgamepsk{\U}{\gamma}(x_k)+\sqrt{2\eta}\,\Z_{k+1}}^2\middle|x_k\right]
&=\eta^2\norm{\nabla \fgamepsk{\U}{\gamma}(x_k)}^2+
2\eta\,\E[\norm{\Z_{k+1}}^2\mid x_k]
\\&~~~~-2\eta\sqrt{2\eta}\,\inner{\nabla \fgamepsk{\U}{\gamma}(x_k)}{\E[\Z_{k+1}\mid x_k]}.
\end{align*}
Again, $\E[\Z_{k+1}\mid x_k]=0$, i.e., the mixed term vanishes. Moreover, since $\Z_{k+1}\sim \mathcal N(0,I_d)$,
\[
\E[\norm{\Z_{k+1}}^2\mid x_k]=\E[\norm{\Z_{k+1}}^2]=n,
\]
i.e.,
\[
\E\left[\norm{-\eta \nabla \fgamepsk{\U}{\gamma}(x_k)+\sqrt{2\eta}\,\Z_{k+1}}^2\middle|x_k\right]
=\eta^2\norm{\nabla \fgamepsk{\U}{\gamma}(x_k)}^2+2\eta n.
\]
Substituting the linear and quadratic expectations into the previous inequality yields
\[
\E[\fgam{\U}{\gamma}(x_{k+1})\mid x_k]\le
\fgam{\U}{\gamma}(x_k)-\eta \inner{\nabla\fgam{\U}{\gamma}(x_k)}{\nabla \fgamepsk{\U}{\gamma}(x_k)}
+\frac{L_\gamma}{2}\bigl(\eta^2\norm{\nabla \fgamepsk{\U}{\gamma}(x_k)}^2+2\eta n\bigr),
\]
that is,
\[
\E[\fgam{\U}{\gamma}(x_{k+1})\mid x_k]\le \fgam{\U}{\gamma}(x_k)
-\eta \inner{\nabla\fgam{\U}{\gamma}(x_k)}{\nabla \fgamepsk{\U}{\gamma}(x_k)}
+\frac{L_\gamma\eta^2}{2}\norm{\nabla \fgamepsk{\U}{\gamma}(x_k)}^2+\eta L_\gamma n.
\]
Now, from \eqref{eq:error_def}, it holds that $\nabla\fgamepsk{\U}{\gamma}(x_k)=\nabla \fgam{\U}{\gamma}(x_k)+e_k$, i.e., 
\[
-\inner{\nabla\fgam{\U}{\gamma}(x_k)}{\nabla \fgamepsk{\U}{\gamma}(x_k)}
=-\norm{\nabla\fgam{\U}{\gamma}(x_k)}^2-\inner{\nabla\fgam{\U}{\gamma}(x_k)}{e_k}.
\]
Applying the Cauchy--Schwarz and Young's inequalities ensures
\[
-\inner{\nabla\fgam{\U}{\gamma}(x_k)}{e_k}
\le \norm{\nabla\fgam{\U}{\gamma}(x_k)}\,\norm{e_k}\le\frac12 \norm{\nabla\fgam{\U}{\gamma}(x_k)}^2+\frac12 \norm{e_k}^2,
\]
i.e.,
\[
-\inner{\nabla\fgam{\U}{\gamma}(x_k)}{\nabla \fgamepsk{\U}{\gamma}(x_k)}\le
-\frac12\norm{\nabla\fgam{\U}{\gamma}(x_k)}^2+\frac12\norm{e_k}^2.
\]
In addition, we have
\[
\norm{\nabla \fgamepsk{\U}{\gamma}(x_k)}^2=\norm{\nabla\fgam{\U}{\gamma}(x_k)+e_k}^2\le 2\norm{\nabla\fgam{\U}{\gamma}(x_k)}^2+2\norm{e_k}^2.
\]
This consequently implies that
\[
\E[\fgam{\U}{\gamma}(x_{k+1})\mid x_k]\le \fgam{\U}{\gamma}(x_k)
-\eta\left(\frac12-L_\gamma\eta\right)\norm{\nabla\fgam{\U}{\gamma}(x_k)}^2
+\eta\left(\frac12+L_\gamma\eta\right)\norm{e_k}^2+\eta L_\gamma n.
\]
Using $\E\left[\delta_k^2\,\middle|\,x_k\right]\le \delta^2$ and taking the total expectation gives
\begin{equation}\label{eq:ineq_bef_strong}
    \E[\fgam{\U}{\gamma}(x_{k+1})]\le\E[\fgam{\U}{\gamma}(x_k)]
-\eta\left(\frac12-L_\gamma\eta\right)\E\norm{\nabla\fgam{\U}{\gamma}(x_k)}^2
+\eta\left(\frac12+L_\gamma\eta\right)\delta^2+\eta L_\gamma n.
\end{equation}
Since $\fgam{\U}{\gamma}$ is $m_\gamma$-strongly convex, it satisfies the Polyak--\L ojasiewicz inequality given by
\[
\norm{\nabla \fgam{\U}{\gamma}(x)}^2\ge 2m_\gamma(\fgam{\U}{\gamma}(x)-\fgam{\U}{\gamma}^\star),
\]
for $\fgam{\U}{\gamma}^\star:=\inf\fgam{\U}{\gamma}$, i.e., 
\[
\eta\left(\frac12-L_\gamma\eta\right)\norm{\nabla\fgam{\U}{\gamma}(x_k)}^2
\ge 2m_\gamma\eta\left(\frac12-L_\gamma\eta\right)\bigl(\fgam{\U}{\gamma}(x_k)-\fgam{\U}{\gamma}^\star\bigr)
=m_\gamma\eta(1-2L_\gamma\eta)\bigl(\fgam{\U}{\gamma}(x_k)-\fgam{\U}{\gamma}^\star\bigr).
\]
Substituting this lower bound into \eqref{eq:ineq_bef_strong} and subtracting $\fgam{\U}{\gamma}^\star$ from both sides gives
\[
\E[\fgam{\U}{\gamma}(x_{k+1})-\fgam{\U}{\gamma}^\star]
\le\left(1-m_\gamma\eta(1-2L_\gamma\eta)\right)\E[\fgam{\U}{\gamma}(x_k)-\fgam{\U}{\gamma}^\star]
+\eta\left(\frac12+L_\gamma\eta\right)\delta^2+\eta L_\gamma n,
\]
which leads to the inequality \eqref{eq:fixed_gap_recur}. 

Now, set $a_k:=\E[\fgam{\U}{\gamma}(X_k)-\fgam{\U}{\gamma}^\star]$ and $b:=\eta\left(\frac12+L_\gamma\eta\right)\delta^2+\eta L_\gamma n$. From \eqref{eq:fixed_gap_recur}, we obtain
\[
a_{k+1}\le \rho a_k+b,
\]
for $\rho:=1-m_\gamma\eta(1-2L_\gamma\eta)\in(0,1)$. Iterating this recursion yields
\[
a_k\le \rho^k a_0 + b\sum_{j=0}^{k-1}\rho^j=\rho^k a_0 + b\frac{1-\rho^k}{1-\rho}.
\]
Since $1-\rho^k\le 1$, it is clear that
\[
a_k\le\rho^k a_0 + \frac{b}{1-\rho}.
\]
Substituting back the definitions of $a_k$, $a_0$, and $b$ results in
\[
\E[\fgam{\U}{\gamma}(x_k)-\fgam{\U}{\gamma}^\star]
\le
\rho^k \E\left[\fgam{\U}{\gamma}(x_0)-\fgam{\U}{\gamma}^\star\right]
+
\frac{\eta\left(\frac12+L_\gamma\eta\right)\delta^2+\eta L_\gamma n}{1-\rho},
\]
adjusting the inequalities \eqref{eq:fixed_gap_bound} and \eqref{eq:adaptive_gap_bound_split}.
\end{proof}


Next, we provide a corresponding bound under a time-varying inexactness schedule. This formulation is particularly useful when the tolerance of the lower-level proximal subproblem is adapted across iterations, for instance via a decaying schedule or a step-size-dependent rule.

\begin{theorem}[\textbf{Adaptive-tolerance objective-gap bound}]
\label{thm:adaptive_gap}
Suppose Assumption~\ref{ass:baseline} holds.
Let $\{x_k\}_{k\in\Nz}$ be generated by the inexact chain \eqref{eq:exact_chain1}
with step-size $\eta\in(0,\tfrac{1}{2L_\gamma})$, where $L_\gamma$ is given in Fact~\ref{fact:basic}~\ref{fact:basic:g}.
Assume further that there exists a deterministic sequence $\{\tau_k\}_{k\in\Nz}$ such that
\[
\E\left[\delta_k^2\,\middle|\,x_k\right]\le \tau_k^2,\qquad \forall k\in\Nz.
\]
Then, for any $x^\star\in\argmin \fgam{\U}{\gamma}$ and for every $k\in\Nz$,
\begin{equation}\label{eq:adaptive_gap_bound_split}
\E\left[\fgam{\U}{\gamma}(x_k)-\fgam{\U}{\gamma}(x^\star)\right]
\le\rho^k \E\left[\fgam{\U}{\gamma}(x_0)-\fgam{\U}{\gamma}(x^\star)\right]
+\eta\left(\frac12+L_\gamma\eta\right) \sum_{j=0}^{k-1}\rho^{k-1-j} \tau_j^2+\eta L_\gamma n  \frac{1-\rho^k}{1-\rho},
\end{equation}
where $\rho$ is given by \eqref{eq:rho_def}.
\end{theorem}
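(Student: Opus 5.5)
The plan is to reuse the one-step descent estimate from the proof of Theorem~\ref{thm:fixed_gap}, keeping the iteration-dependent error term instead of bounding it uniformly, and then unroll a recursion with a time-varying additive term. None of the steps before the substitution of $\E[\delta_k^2\mid x_k]$ depend on the fixed tolerance. Specifically, $L_\gamma$-smoothness of $\fgam{\U}{\gamma}$ (Fact~\ref{fact:basic}~\ref{fact:basic:g}), the conditional expectation over the Gaussian increment, the decomposition $\nabla\fgamepsk{\U}{\gamma}(x_k)=\nabla\fgam{\U}{\gamma}(x_k)+e_k$, and the Young and $\norm{a+b}^2\le 2\norm{a}^2+2\norm{b}^2$ estimates all go through unchanged. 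So I will first record the conditional inequality
\[
\E[\fgam{\U}{\gamma}(x_{k+1})\mid x_k]\le \fgam{\U}{\gamma}(x_k)-\eta\Bigl(\tfrac12-L_\gamma\eta\Bigr)\norm{\nabla\fgam{\U}{\gamma}(x_k)}^2+\eta\Bigl(\tfrac12+L_\gamma\eta\Bigr)\norm{e_k}^2+\eta L_\gamma n .
\]

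Next I handle the error term. The quantity $\norm{e_k}^2=\delta_k^2$ may depend on randomness in the inner solver beyond $x_k$. I will therefore take conditional expectation given $x_k$ once more, which is legitimate since everything else is $x_k$-measurable, and use $\E[\delta_k^2\mid x_k]\le\tau_k^2$. Taking total expectation then yields the analogue of \eqref{eq:ineq_bef_strong} with $\delta^2$ replaced by $\tau_k^2$; this is where the determinism of $\tau_k$ is used. Then I apply the Polyak--\L ojasiewicz inequality coming from the $m_\gamma$-strong convexity of Fact~\ref{fact:strong_convex}, exactly as before, noting that $\tfrac12-L_\gamma\eta>0$ because $\eta<1/(2L_\gamma)$. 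Subtracting $\fgam{\U}{\gamma}(x^\star)=\inf\fgam{\U}{\gamma}$ gives
\[
a_{k+1}\le \rho\, a_k + c\,\tau_k^2 + d,\qquad a_k:=\E[\fgam{\U}{\gamma}(x_k)-\fgam{\U}{\gamma}(x^\star)],\quad c:=\eta\Bigl(\tfrac12+L_\gamma\eta\Bigr),\quad d:=\eta L_\gamma n,
\]
with $\rho\in(0,1)$ as in \eqref{eq:rho_def}.

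Finally I unroll this by induction on $k$. The case $k=0$ is trivial, since the sums are empty and $1-\rho^0=0$. For the inductive step, assuming $a_k\le \rho^k a_0+c\sum_{j=0}^{k-1}\rho^{k-1-j}\tau_j^2+d\sum_{j=0}^{k-1}\rho^j$, I multiply by $\rho\ge0$ and add $c\tau_k^2+d$. This gives the same expression with $k+1$, because the new $\tau_k^2$ term carries weight $\rho^0$. The geometric sum equals $(1-\rho^k)/(1-\rho)$, which is \eqref{eq:adaptive_gap_bound_split}. The only delicate point is the measurability and conditioning step for $\delta_k^2$, which ensures the bound holds with the deterministic $\tau_k$ after taking full expectations. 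Everything else is a direct transcription of the fixed-tolerance argument; in fact, setting $\tau_k\equiv\delta$ recovers \eqref{eq:fixed_gap_bound}, which serves as a consistency check.
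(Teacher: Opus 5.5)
Your proposal is correct and follows the paper's proof essentially step for step. The paper likewise reuses the one-step descent estimate from Theorem~\ref{thm:fixed_gap}, replaces the uniform bound $\delta^2$ by $\tau_k^2$ through $\E[\norm{e_k}^2\mid x_k]\le\tau_k^2$, arrives at $a_{k+1}\le\rho a_k+\eta(\tfrac12+L_\gamma\eta)\tau_k^2+\eta L_\gamma n$, and unrolls this recursion. Your explicit induction and your remark on conditioning when the inner solver has its own randomness (which tacitly needs that randomness to be independent of the Gaussian increment) make precise what the paper compresses into ``iterating the recursion.''
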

\begin{proof}
The proof is identical to that of Theorem~\ref{thm:fixed_gap}, except that $\E\left[\delta_k^2\,\middle|\,x_k\right]\le \tau_k^2$ gives
\[
\E[\norm{e_k}^2\mid x_k]\le \tau_k^2,
\]
instead of the uniform bound by $\delta^2$. This consequently implies
\[
\E[\fgam{\U}{\gamma}(x_{k+1})-\fgam{\U}{\gamma}^\star]\le \rho\,\E[\fgam{\U}{\gamma}(x_k)-\fgam{\U}{\gamma}^\star]
+\eta\left(\frac12+L_\gamma\eta\right)\tau_k^2+\eta L_\gamma n.
\]
Iterating the recursion leads to the inequality \eqref{eq:adaptive_gap_bound_split}.
\end{proof}

Theorem~\ref{thm:adaptive_gap} separates the effect of the time-varying proximal inexactness from the effect of the Langevin noise. The convolution term involving $\tau_j^2$ records the accumulated lower-level errors, while the term
$\eta L_\gamma n\frac{1-\rho^k}{1-\rho}$ is the persistent contribution of the Gaussian noise. Hence, even if $\tau_k\to 0$, this objective-gap estimate still contains a nonzero noise floor for constant step-size Langevin dynamics.

\begin{corollary}[\textbf{Step-size-matched tolerance rule}]\label{cor:step_matched}
Suppose the assumptions of Theorem~\ref{thm:adaptive_gap} hold.
Assume that the tolerance schedule satisfies
\[
\tau_k\le c\sqrt{\eta},\qquad \forall k\in\Nz,
\]
for some constant $c>0$. Then, for every $k\in\Nz$,
\begin{equation}\label{eq:step_matched_bound}
\E\left[\fgam{\U}{\gamma}(x_k)-\fgam{\U}{\gamma}(x^\star)\right]\le
\rho^k \E\left[\fgam{\U}{\gamma}(x_0)-\fgam{\U}{\gamma}(x^\star)\right]
+\frac{\eta L_\gamma n + c^2\eta^2(\frac12+L_\gamma\eta)}{1-\rho}.
\end{equation}
Equivalently,
\begin{equation}\label{eq:step_matched_bound_expanded}
\E\left[\fgam{\U}{\gamma}(x_k)-\fgam{\U}{\gamma}(x^\star)\right]\le
\rho^k \E\left[\fgam{\U}{\gamma}(x_0)-\fgam{\U}{\gamma}(x^\star)\right]
+\frac{L_\gamma n + c^2\eta(\frac12+L_\gamma\eta)}{m_\gamma(1-2L_\gamma\eta)}.
\end{equation}
\end{corollary}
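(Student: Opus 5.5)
The plan is to specialize Theorem~\ref{thm:adaptive_gap} to the schedule $\tau_k\le c\sqrt{\eta}$ and then sum the resulting geometric series. Starting from \eqref{eq:adaptive_gap_bound_split}, I will replace each $\tau_j^2$ by its uniform bound $c^2\eta$. The inexactness term then becomes
\[
\eta\Bigl(\tfrac12+L_\gamma\eta\Bigr)\sum_{j=0}^{k-1}\rho^{k-1-j}\tau_j^2\le c^2\eta^2\Bigl(\tfrac12+L_\gamma\eta\Bigr)\sum_{i=0}^{k-1}\rho^{i}=c^2\eta^2\Bigl(\tfrac12+L_\gamma\eta\Bigr)\frac{1-\rho^k}{1-\rho}.
\]
Since $\rho\in(0,1)$ by \eqref{eq:rho_def}, I will drop the factor $1-\rho^k\le 1$ in this term and also in the noise term $\eta L_\gamma n\frac{1-\rho^k}{1-\rho}$. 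Combining the two fractions over the common denominator $1-\rho$ gives \eqref{eq:step_matched_bound} directly. An alternative route is to note that $\tau_k^2\le c^2\eta$ plays exactly the role of $\delta^2$ in Theorem~\ref{thm:fixed_gap}, and substitute $\delta^2=c^2\eta$ into \eqref{eq:fixed_gap_bound}. I will mention this as a consistency check.

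For the equivalent form \eqref{eq:step_matched_bound_expanded}, I will use the explicit value $1-\rho=m_\gamma\eta(1-2L_\gamma\eta)$ from \eqref{eq:rho_def}. This quantity is positive because $\eta<\tfrac{1}{2L_\gamma}$ and $m_\gamma>0$. Dividing the numerator $\eta L_\gamma n+c^2\eta^2(\tfrac12+L_\gamma\eta)$ by it, a factor $\eta$ cancels, leaving $\frac{L_\gamma n+c^2\eta(\frac12+L_\gamma\eta)}{m_\gamma(1-2L_\gamma\eta)}$.

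There is no real obstacle here; the argument is bookkeeping. The only points needing care are that $\rho\in(0,1)$, which is needed both for summing the geometric series and for the positivity of $1-\rho$, and that the bound $\tau_k\le c\sqrt{\eta}$ is deterministic, so substituting it inside the expectation-based recursion of Theorem~\ref{thm:adaptive_gap} is legitimate.
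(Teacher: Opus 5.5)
Your proposal is correct and follows the same route as the paper: the paper gives no separate proof, but the remark after the corollary does exactly your bookkeeping. That is, bound $\tau_j^2\le c^2\eta$ in \eqref{eq:adaptive_gap_bound_split}, sum the geometric series and drop $1-\rho^k\le 1$, then substitute $1-\rho=m_\gamma\eta(1-2L_\gamma\eta)$ and cancel one factor of $\eta$ to get the expanded form. Your consistency check, applying Theorem~\ref{thm:fixed_gap} with $\delta^2=c^2\eta$, is also valid and gives the same bound.
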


The expanded form \eqref{eq:step_matched_bound_expanded} separates the two sources of error. Since
$1-\rho=m_\gamma\eta(1-2L_\gamma\eta)$, the Gaussian-noise contribution satisfies
\[
    \frac{\eta L_\gamma n}{1-\rho} =\frac{L_\gamma n}{m_\gamma(1-2L_\gamma\eta)} =  O(1),
\]
whereas the step-size-matched oracle contribution satisfies
\[
    \frac{c^2\eta^2(\frac12+L_\gamma\eta)}{1-\rho} = \frac{c^2\eta(\frac12+L_\gamma\eta)}{m_\gamma(1-2L_\gamma\eta)} = O(\eta).
\]
Thus, the rule $\tau_k=O(\sqrt{\eta})$ makes the inexact-proximal contribution vanish as $\eta\to0$, while the Gaussian-noise contribution remains nonzero in this
objective-gap-to-minimizer estimate. This is expected because iPULA is a sampling method for $\pi_\gamma$, not a deterministic optimization method converging to $x^\star$.

Next, we denote by $\Law(x_k)$ the probability distribution of the random variable $x_k$. The following result compares the inexact iPULA chain with the exact Moreau–ULA chain under a synchronous coupling, i.e., when both chains are driven by the same Gaussian noise. This bound isolates the additional sampling error induced by the inexact proximal computation.

\begin{theorem}[\textbf{Fixed-tolerance Wasserstein transfer}]\label{thm:fixed_w2}
Suppose Assumption~\ref{ass:baseline} holds and there exists $\delta\ge 0$ such that $\delta_k\le \delta$ almost surely for all $k\in \Nz$.
Let $\{\ov x_k\}_{k\in \Nz}$ and $\{x_k\}_{k\in \Nz}$ be the exact and inexact chains \eqref{eq:exact_chain} and \eqref{eq:exact_chain1}, respectively, driven by the same Gaussian noises and started from the same initial point. Define
\begin{equation}
q:=\sqrt{1-2m_\gamma\eta+L_\gamma^2\eta^2},
\qquad \eta\in(0,2m_\gamma/L_\gamma^2)\cap(0,2/L_\gamma).
\label{eq:q_def}
\end{equation}
Then, $q\in[0,1)$, and for every $k\in\Nz$,
\begin{equation}\label{eq:fixed_transfer}
\W\left(\Law(x_k),\Law(\ov x_k)\right)\le\frac{\eta\delta(1-q^k)}{1-q}.
\end{equation}
Consequently,
\begin{equation}\label{eq:fixed_transfer_to_target}
\W \left(\Law(x_k),\pi_\gamma\right)\le \W\left(\Law(\ov x_k),\pi_\gamma\right)+\frac{\eta\delta(1-q^k)}{1-q}.
\end{equation}
\end{theorem}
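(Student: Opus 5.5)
The plan is a synchronous-coupling argument. The two chains are driven by the same noise $\Z_k$ and start from the same point, so the pair $(x_k,\ov x_k)$ is a coupling of $\Law(x_k)$ and $\Law(\ov x_k)$. By the definition of $\W$, it then suffices to bound $\bigl(\E\norm{x_k-\ov x_k}^2\bigr)^{1/2}$. In fact we will show the pathwise bound $\norm{x_k-\ov x_k}\le \eta\delta(1-q^k)/(1-q)$ almost surely, which is stronger.

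The first step is to show $q\in[0,1)$. Since $\fgam{\U}{\gamma}$ is $m_\gamma$-strongly convex (Fact~\ref{fact:strong_convex}) and $L_\gamma$-smooth (Fact~\ref{fact:basic}~\ref{fact:basic:g}), we have $m_\gamma\le L_\gamma$. Hence
\[
1-2m_\gamma\eta+L_\gamma^2\eta^2\ge (1-m_\gamma\eta)^2\ge 0,
\]
so $q$ is well defined. Moreover, $\eta<2m_\gamma/L_\gamma^2$ gives $q<1$.

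The second step is a contraction estimate for the exact gradient map $T(x):=x-\eta\nabla\fgam{\U}{\gamma}(x)$. Expanding the square gives
\[
\norm{T(x)-T(y)}^2=\norm{x-y}^2-2\eta\inner{\nabla\fgam{\U}{\gamma}(x)-\nabla\fgam{\U}{\gamma}(y)}{x-y}+\eta^2\norm{\nabla\fgam{\U}{\gamma}(x)-\nabla\fgam{\U}{\gamma}(y)}^2 .
\]
Strong monotonicity of the gradient gives $\inner{\nabla\fgam{\U}{\gamma}(x)-\nabla\fgam{\U}{\gamma}(y)}{x-y}\ge m_\gamma\norm{x-y}^2$, and the Lipschitz bound controls the last term by $L_\gamma^2\norm{x-y}^2$. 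Together these yield $\norm{T(x)-T(y)}\le q\norm{x-y}$.

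The third step writes the inexact update as $x_{k+1}=T(x_k)-\eta e_k+\sqrt{2\eta}\Z_k$ and the exact update as $\ov x_{k+1}=T(\ov x_k)+\sqrt{2\eta}\Z_k$. The noise cancels on subtraction. The triangle inequality then gives
\[
\norm{x_{k+1}-\ov x_{k+1}}\le q\norm{x_k-\ov x_k}+\eta\delta_k\le q\norm{x_k-\ov x_k}+\eta\delta .
\]
Unrolling from $x_0=\ov x_0$ gives $\norm{x_k-\ov x_k}\le\eta\delta\sum_{j=0}^{k-1}q^j=\eta\delta(1-q^k)/(1-q)$ almost surely. Squaring, taking expectations and a square root yields \eqref{eq:fixed_transfer}. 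Finally, \eqref{eq:fixed_transfer_to_target} follows from the triangle inequality for $\W$.

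The main obstacle is only bookkeeping: checking that $q$ is real and below $1$ under the stated step-size range. The almost-sure bound on $\delta_k$ is what allows a pathwise recursion. With only a mean-square bound, one would instead run the same recursion on $\bigl(\E\norm{x_k-\ov x_k}^2\bigr)^{1/2}$ using Minkowski's inequality.
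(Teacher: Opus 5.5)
Your proposal is correct and follows the paper's synchronous-coupling argument. The steps match: the contraction $\norm{d_k-\eta A_k}\le q\norm{d_k}$ from strong monotonicity and Lipschitz continuity of $\nabla\fgam{\U}{\gamma}$, the almost-sure recursion $\norm{d_{k+1}}\le q\norm{d_k}+\eta\delta$ unrolled from $d_0=0$, then the coupling bound and triangle inequality for $\W$. Your explicit check that $1-2m_\gamma\eta+L_\gamma^2\eta^2\ge(1-m_\gamma\eta)^2\ge 0$ (using $m_\gamma\le L_\gamma$) fills in a step the paper only asserts.
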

\begin{proof}
Let us couple the exact and inexact chains using the same Gaussian noises, i.e.,
\begin{align*}
\ov x_{k+1} &= \ov x_k-\eta \nabla \fgam{\U}{\gamma}(\ov x_k)+\sqrt{2\eta}\,\Z_{k+1},
\\
x_{k+1} &= x_k-\eta \left(\nabla \fgam{\U}{\gamma}(x_k)+e_k\right)+\sqrt{2\eta}\,\Z_{k+1}.
\end{align*}
Set $d_k:=x_k-\ov x_k$, i.e.,
\[
d_{k+1} = d_k-\eta(\nabla \fgam{\U}{\gamma}(x_k)-\nabla \fgam{\U}{\gamma}(\ov x_k))-\eta e_k.
\]
For convenience, let us define
\[
A_k:=\nabla \fgam{\U}{\gamma}(x_k)-\nabla \fgam{\U}{\gamma}(\ov x_k),
\]
i.e., $d_{k+1}=d_k-\eta A_k-\eta e_k$.
We first estimate the deterministic part $d_k-\eta A_k$. Expanding its squared norm gives
\[
\norm{d_k-\eta A_k}^2=\norm{d_k}^2-2\eta \inner{d_k}{A_k}+\eta^2\norm{A_k}^2.
\]
Since $\fgam{\U}{\gamma}$ is $m_\gamma$-strongly convex, its gradient is $m_\gamma$-strongly monotone, leading to
\[
\inner{d_k}{A_k}=\inner{x_k-\ov x_k}{\nabla \fgam{\U}{\gamma}(x_k)-\nabla \fgam{\U}{\gamma}(\ov x_k)}\ge m_\gamma\norm{d_k}^2.
\]
Moreover, since $\fgam{\U}{\gamma}$ is $L_\gamma$-smooth, 
\[
\norm{A_k}=\norm{\nabla \fgam{\U}{\gamma}(x_k)-\nabla \fgam{\U}{\gamma}(\ov x_k)}\le L_\gamma\norm{d_k}.
\]
Substituting these two bounds into the expansion above yields
\[
\norm{d_k-\eta A_k}^2\le \norm{d_k}^2-2m_\gamma\eta \norm{d_k}^2+L_\gamma^2\eta^2\norm{d_k}^2=(1-2m_\gamma\eta+L_\gamma^2\eta^2)\norm{d_k}^2.
\]
Define $q:=\sqrt{1-2m_\gamma\eta+L_\gamma^2\eta^2}$. Under the step-size condition of the theorem, we have $q\in[0,1)$, and consequently it implies
\[
\norm{d_k-\eta A_k}\le q\norm{d_k}.
\]
It follows from the triangle inequality that
\[
\norm{d_{k+1}}\le \norm{d_k-\eta A_k}+\eta\norm{e_k}\le q\norm{d_k}+\eta\norm{e_k},
\]
almost surely. By the assumption that $\delta_k\le \delta$ almost surely for all $k\in \Nz$, we have $\norm{e_k}\le \delta$ almost surely for all $k$, which leads to
\[
\norm{d_{k+1}}\le q\norm{d_k}+\eta\delta.
\]
Iterating this inequality, we come to
\[
\norm{d_k}\le q^k\norm{d_0}+\eta\delta\sum_{j=0}^{k-1}q^j.
\]
Since the two chains are started from the same point, $d_0=0$, and it can be deduced that
\[
\norm{d_k}\le \eta\delta\sum_{j=0}^{k-1}q^j=\frac{\eta\delta(1-q^k)}{1-q},
\]
almost surely. Using the fact that $(x_k,\ov x_k)$ is a coupling of laws $\Law(x_k)$ and $\Law(\ov x_k)$, the coupling characterization of $\mathcal W_2$ yields
\[
\mathcal W_2\left(\Law(x_k),\Law(\ov x_k)\right)\le \left(\E\norm{x_k-\bar x_k}^2\right)^{1/2}
=\left(\E\norm{d_k}^2\right)^{1/2}\le\frac{\eta\delta(1-q^k)}{1-q},
\]
which leads to \eqref{eq:fixed_transfer}.
Finally, applying the triangle inequality for $\mathcal W_2$ ensures
\[
\mathcal W_2\left(\Law(x_k),\pi_\gamma\right)\le \mathcal W_2\left(\Law(x_k),\Law(\ov x_k)\right)
+\mathcal W_2\left(\Law(\ov x_k),\pi_\gamma\right),
\]
and the substitution of this into the previous estimate leads to the inequality \eqref{eq:fixed_transfer_to_target}.
\end{proof}

The following result provides the analogue of Theorem~\ref{thm:fixed_w2} under a time-varying proximal accuracy schedule. Rather than accumulating a fixed inexactness level, the resulting transfer error takes the form of a geometrically weighted convolution of the tolerances ${\tau_k}$.

\begin{theorem}[\textbf{Adaptive-tolerance Wasserstein transfer}]
\label{thm:adaptive_w2}
Suppose Assumption~\ref{ass:baseline} holds and there exists a deterministic sequence $\{\tau_k\}_{k\ge0}$ such that
$\delta_k\le \tau_k$ almost surely for all  $k\in\Nz$. Then, under the same synchronous coupling as in Theorem~\ref{thm:fixed_w2}, for every $k\in\Nz$,
\begin{equation}
\W\left(\Law(x_k),\Law(\ov x_k)\right)\le\eta\sum_{j=0}^{k-1} q^{k-1-j}\tau_j,
\label{eq:adaptive_transfer}
\end{equation}
where $q$ is given by \eqref{eq:q_def}. Consequently,
\begin{equation}
\W \left(\Law(x_k),\pi_\gamma\right)\le \W\left(\Law(\ov x_k),\pi_\gamma\right)+\eta\sum_{j=0}^{k-1} q^{k-1-j}\tau_j.
\label{eq:adaptive_transfer_to_target}
\end{equation}
\end{theorem}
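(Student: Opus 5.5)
The plan is to repeat the synchronous-coupling argument from the proof of Theorem~\ref{thm:fixed_w2} verbatim, changing only the last step, where the uniform bound $\delta$ was used. Couple the exact chain $\{\ov x_k\}$ and the inexact chain $\{x_k\}$ through the same Gaussian increments $\Z_{k+1}$ and the same initial point. Set $d_k:=x_k-\ov x_k$ and $A_k:=\nabla\fgam{\U}{\gamma}(x_k)-\nabla\fgam{\U}{\gamma}(\ov x_k)$. The noise cancels, so $d_{k+1}=d_k-\eta A_k-\eta e_k$. The step-size range \eqref{eq:q_def} is implicitly part of the hypotheses, since $q$ is defined there, and it guarantees $q\in[0,1)$.

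First I invoke two earlier facts. The $m_\gamma$-strong monotonicity of $\nabla\fgam{\U}{\gamma}$ comes from Fact~\ref{fact:strong_convex}, and its $L_\gamma$-Lipschitz continuity comes from Fact~\ref{fact:basic}~\ref{fact:basic:g}. Together they give $\norm{d_k-\eta A_k}\le q\norm{d_k}$, exactly as before. The triangle inequality and the almost sure bound $\norm{e_k}=\delta_k\le\tau_k$ then yield the pathwise recursion
\[
\norm{d_{k+1}}\le q\norm{d_k}+\eta\tau_k \quad\text{almost surely}.
\]

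Next I unroll this recursion by induction on $k$, using $d_0=0$. This gives $\norm{d_k}\le \eta\sum_{j=0}^{k-1}q^{k-1-j}\tau_j$ almost surely; for $k=0$ the sum is empty and the bound holds trivially. The inductive step is immediate:
\[
q\,\eta\sum_{j=0}^{k-1}q^{k-1-j}\tau_j+\eta\tau_k=\eta\sum_{j=0}^{k}q^{k-j}\tau_j .
\]

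Because the $\tau_j$ are deterministic, the right-hand side is a constant, so the almost sure bound passes to the second moment: $(\E\norm{d_k}^2)^{1/2}$ is bounded by the same constant. Since $(x_k,\ov x_k)$ is a coupling of $\Law(x_k)$ and $\Law(\ov x_k)$, the definition of $\W$ gives \eqref{eq:adaptive_transfer}. The triangle inequality for $\W$ then gives \eqref{eq:adaptive_transfer_to_target}.

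There is no real obstacle here; the only point needing care is using the determinism of $\tau_k$. If $\tau_k$ were random, one would instead apply Minkowski's inequality in $L^2$ to the recursion, which gives the same bound with $\tau_j$ replaced by $(\E\tau_j^2)^{1/2}$. As a consistency check, setting $\tau_j\equiv\delta$ recovers Theorem~\ref{thm:fixed_w2}.
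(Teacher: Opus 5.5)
Your proposal is correct and follows the paper's proof essentially verbatim. Both use the same synchronous coupling, derive the pathwise recursion $\norm{d_{k+1}}\le q\norm{d_k}+\eta\tau_k$, unroll it from $d_0=0$, and then apply the coupling characterization and the triangle inequality for $\W$; your explicit induction step and your remark that determinism of $\tau_k$ lets the almost sure bound pass to the second moment just spell out what the paper leaves implicit.
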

\begin{proof}
The proof follows the same synchronous coupling argument as in Appendix~\ref{app:proof_fixed_w2}. Starting from
$\norm{d_{k+1}}\le q\norm{d_k}+\eta \delta_k$ and obtaining $\norm{d_{k+1}}\le q\norm{d_k}+\eta \tau_k$.
Iterating this recursion yields
\[
\norm{d_k}\le \eta\sum_{j=0}^{k-1} q^{k-1-j}\tau_j,
\]
since $d_0=0$. The claimed Wasserstein bounds follow exactly as before.
\end{proof}
\begin{corollary}[Adaptive schedules]
\label{cor:adaptive_schedules}
Under the assumptions of Theorem~\ref{thm:adaptive_w2}:
\begin{enumerate}[label=(\textbf{\alph*}), font=\normalfont\bfseries, leftmargin=0.7cm]
    \item If $\tau_k\equiv \tau$ for all $k\in\Nz$, then
    \[
        \W\left(\Law(x_k),\Law(\ov x_k)\right)\le \frac{\eta\tau(1-q^k)}{1-q},
    \]
    which recovers the fixed-tolerance transfer bound with $\delta=\tau$;
    \item if $\tau_k\to0$, then the inexactness perturbation term in \eqref{eq:adaptive_transfer_to_target} vanishes asymptotically;
    \item if $\tau_k\le c\sqrt{\eta}$ for all $k\in\Nz$, then
    \[
  \W\left(\Law(x_k),\Law(\ov x_k)\right)\le \frac{c\,\eta^{3/2}(1-q^k)}{1-q}.
    \]
\end{enumerate}
\end{corollary}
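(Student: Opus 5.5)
The corollary is a direct specialization of the bound \eqref{eq:adaptive_transfer} in Theorem~\ref{thm:adaptive_w2}, so I will prove each item by substituting the given schedule into
\[
S_k:=\eta\sum_{j=0}^{k-1}q^{k-1-j}\tau_j
\]
and evaluating or estimating the resulting geometric sum. Throughout I use that $q\in[0,1)$ under the step-size restriction \eqref{eq:q_def}, exactly as established in Theorem~\ref{thm:fixed_w2}.

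For item (a), with $\tau_j\equiv\tau$ one gets
\[
S_k=\eta\tau\sum_{i=0}^{k-1}q^i=\frac{\eta\tau(1-q^k)}{1-q}.
\]
This coincides with \eqref{eq:fixed_transfer} for $\delta=\tau$. For item (c), the hypothesis $\tau_j\le c\sqrt\eta$ gives termwise $S_k\le \eta c\sqrt\eta\sum_{i=0}^{k-1}q^i$, which yields $c\,\eta^{3/2}(1-q^k)/(1-q)$. Both are one-line computations.

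Item (b) is the only part needing an argument: I must show that $S_k\to0$ whenever $\tau_k\to0$, that is, that the discrete convolution of a null sequence with the summable kernel $q^i$ is null. I plan the standard splitting. Fix $\epsilon>0$ and choose $N$ with $\tau_j\le\epsilon$ for all $j\ge N$. Set $M:=\sup_j\tau_j<\infty$, which is finite because a convergent sequence is bounded. For $k>N$, split the sum at $j=N$:
\[
S_k\le \eta M\sum_{j=0}^{N-1}q^{k-1-j}+\eta\epsilon\sum_{j=N}^{k-1}q^{k-1-j}\le \eta M\frac{q^{k-N}}{1-q}+\frac{\eta\epsilon}{1-q}.
\]
Since $q<1$, the first term vanishes as $k\to\infty$. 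Hence $\limsup_k S_k\le\eta\epsilon/(1-q)$, and because $\epsilon$ is arbitrary, $S_k\to0$. The perturbation term in \eqref{eq:adaptive_transfer_to_target} is exactly $S_k$, so the claim follows.

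The main point to watch is the edge case $q=0$, where terms like $q^0$ appear. I expect no real obstacle there, since the interpretation $0^0=1$ keeps all the formulas above valid. The splitting in (b) is the only step beyond substitution.
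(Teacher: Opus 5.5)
Your proposal is correct and follows the paper's route: the paper gives no separate proof of this corollary and treats all three items as direct substitution of the schedule into \eqref{eq:adaptive_transfer}, followed by summing the geometric series. Your splitting argument for (b), which bounds the first $N$ terms by $\eta M q^{k-N}/(1-q)$ and the tail by $\eta\epsilon/(1-q)$, correctly supplies the one step the paper leaves implicit.
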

Corollary~\ref{cor:adaptive_schedules} clarifies how different lower-level tolerance rules affect the Wasserstein transfer error. A fixed tolerance produces a persistent
inexactness bias, a vanishing tolerance removes the additional inexactness perturbation asymptotically, and the step-size-matched rule $\tau_k=O(\sqrt{\eta})$ yields an
inexactness contribution of order $\eta^{3/2}$ before normalization by $1-q$.


The previous Wasserstein transfer bounds compare the inexact chain with the corresponding exact Moreau-smoothed chain. To obtain a direct convergence result for \eqref{eq:exact_chain1} toward the smoothed target $\pi_\gamma$, we still need a nonasymptotic Wasserstein estimate for the exact chain itself. We therefore isolate such an estimate in abstract form and then combine it with Theorem~\ref{thm:fixed_w2}.

\begin{assumption}[\textbf{Exact-chain Wasserstein estimate}]
\label{ass:exact_w2}
Assume that the exact Moreau-smoothed chain $\{\ov x_k\}_{k\in\Nz}$ satisfies
\begin{equation}\label{eq:exact_floor_w2_assumption}
\mathcal W_2\left(\Law(\ov x_k),\pi_\gamma\right)\le \rho^k \mathcal W_2\left(\Law(x_0),\pi_\gamma\right)
+B_{\mathrm{disc}}(\eta),\qquad \forall k\in\Nz,
\end{equation}
for some contraction factor $\rho\in(0,1)$ and some discretization floor $B_{\mathrm{disc}}(\eta)\ge 0$.
\end{assumption}

\begin{remark}
The quantity $B_{\mathrm{disc}}(\eta)$ represents the residual Wasserstein bias of the exact constant-step Moreau-smoothed chain relative to $\pi_\gamma$. It plays the same role as the discretization floor in constant-step Langevin results such as \cite[Theorem~5]{GhaderiSULA}.
\end{remark}

\begin{theorem}[Inexact Wasserstein convergence]
\label{thm:imula_w2_complete}
Suppose Assumptions~\ref{ass:baseline} and \ref{ass:exact_w2} hold and there exists $\delta\ge 0$ such that $\delta_k\le \delta$ almost surely for all $k\in \Nz$. Let $\{\ov x_k\}_{k\in \Nz}$ and $\{x_k\}_{k\in \Nz}$ be the exact and inexact chains \eqref{eq:exact_chain} and \eqref{eq:exact_chain1}, respectively, driven by the same Gaussian noises and started from the same initial point. Then
\begin{equation}\label{eq:imula_complete_w2}
\mathcal W_2\left(\Law(x_k),\pi_\gamma\right)\le \rho^k \mathcal W_2 \left(\Law(x_0),\pi_\gamma\right)
+B_{\mathrm{disc}}(\eta)+\frac{\eta\delta(1-q^k)}{1-q},\qquad \forall k\in\Nz,
\end{equation}
where $q$ is given in \eqref{eq:q_def}.
In particular,
\begin{equation}\label{eq:imula_stationary_floor}
\limsup_{k\to\infty}\mathcal W_2\left(\Law(x_k),\pi_\gamma\right)\le B_{\mathrm{disc}}(\eta)+\frac{\eta\delta}{1-q}.
\end{equation}
\end{theorem}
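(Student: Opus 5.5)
The bound \eqref{eq:imula_complete_w2} follows by combining the two ingredients already available: the exact-chain estimate of Assumption~\ref{ass:exact_w2} and the coupling bound of Theorem~\ref{thm:fixed_w2}. I would split the error with the triangle inequality for $\mathcal W_2$. Since $\mathcal W_2$ is a metric on probability measures with finite second moments, it applies to the intermediate law $\Law(\ov x_k)$:
\[
\mathcal W_2\left(\Law(x_k),\pi_\gamma\right)\le \mathcal W_2\left(\Law(x_k),\Law(\ov x_k)\right)+\mathcal W_2\left(\Law(\ov x_k),\pi_\gamma\right).
\]
This is exactly \eqref{eq:fixed_transfer_to_target}.

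Next I bound each term. The first term, the inexactness transfer, is controlled by \eqref{eq:fixed_transfer}. This requires that the hypotheses of Theorem~\ref{thm:fixed_w2} be in force: the same Gaussian noises, the same initial point, $\delta_k\le\delta$ almost surely, and the step-size restriction in \eqref{eq:q_def} that makes $q\in[0,1)$. I will state that this step-size condition is implicitly assumed, because $q$ appears in the statement. The second term is handled by \eqref{eq:exact_floor_w2_assumption}. Since both chains start from the same initial point, $\Law(\ov x_0)=\Law(x_0)$, so the initial Wasserstein distance in Assumption~\ref{ass:exact_w2} can be written in terms of $\Law(x_0)$ exactly as displayed. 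Adding the two bounds gives \eqref{eq:imula_complete_w2}.

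For the limsup statement \eqref{eq:imula_stationary_floor}, I take $\limsup_{k\to\infty}$ on both sides. Because $\rho\in(0,1)$, the term $\rho^k\mathcal W_2(\Law(x_0),\pi_\gamma)\to0$, provided $\mathcal W_2(\Law(x_0),\pi_\gamma)<\infty$. This holds for a deterministic start whenever $\pi_\gamma$ has a finite second moment. I will confirm that it does: $\fgam{\U}{\gamma}$ is $m_\gamma$-strongly convex by Fact~\ref{fact:strong_convex}, so $e^{-\fgam{\U}{\gamma}}$ has Gaussian tails. For the remaining term I use $(1-q^k)\le 1$, or equivalently $q^k\to0$. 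Together these give the floor $B_{\mathrm{disc}}(\eta)+\eta\delta/(1-q)$.

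The argument is essentially bookkeeping, so the only real point of care is consistency of hypotheses. Two things need checking: that the step-size range in \eqref{eq:q_def} is compatible with whatever range Assumption~\ref{ass:exact_w2} presumes, and that $\rho$ in Assumption~\ref{ass:exact_w2} is simply an abstract contraction factor, not the $\rho$ of \eqref{eq:rho_def}. I will note both explicitly rather than try to reconcile them.
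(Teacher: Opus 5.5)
Your proposal is correct and follows the paper's own proof essentially verbatim: you apply the triangle-inequality consequence \eqref{eq:fixed_transfer_to_target} of Theorem~\ref{thm:fixed_w2}, bound $\mathcal W_2(\Law(\ov x_k),\pi_\gamma)$ by Assumption~\ref{ass:exact_w2}, and pass to the $\limsup$. Your extra remarks on the step-size range implicit in \eqref{eq:q_def}, on $\Law(\ov x_0)=\Law(x_0)$, on the finiteness of $\mathcal W_2(\Law(x_0),\pi_\gamma)$ via strong convexity of $\fgam{\U}{\gamma}$, and on the abstract $\rho$ of Assumption~\ref{ass:exact_w2} differing from \eqref{eq:rho_def} are sound points that the paper leaves implicit.
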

\begin{proof}
By Theorem~\ref{thm:fixed_w2},
\[
\W \left(\Law(x_k),\pi_\gamma\right)\le \W\left(\Law(\ov x_k),\pi_\gamma\right)+\frac{\eta\delta(1-q^k)}{1-q}.
\]
Applying the exact-chain bound \eqref{eq:exact_floor_w2_assumption} ensures
\[
\mathcal W_2\left(\Law(x_k),\pi_\gamma\right)\le \rho^k \mathcal W_2\left(\Law(x_0),\pi_\gamma\right)+B_{\mathrm{disc}}(\eta)+\frac{\eta\delta(1-q^k)}{1-q},
\]
which consequently leads to \eqref{eq:imula_complete_w2}. Letting $k\to\infty$ implies that the inequality \eqref{eq:imula_stationary_floor} holds true, which completes the proof.
\end{proof}

\section{Experiments}
\label{sec:experiments}

We evaluate iPULA on a nonsmooth Gaussian image deblurring problem with total variation regularization. Given a ground-truth image $x_{\mathrm{true}} \in \mathbb{R}^{n\times n}$, the observation model is
\begin{equation}
y = Hx_{\mathrm{true}} + \varepsilon,
\qquad
\varepsilon \sim \mathcal{N}(0,\sigma^2 I),
\end{equation}
where $H$ is a spatially invariant blur operator and $\varepsilon$ is additive Gaussian noise. We sample from the posterior associated with the composite potential
\begin{equation}
U(x)
=
\frac{1}{2\sigma^2}\|Hx-y\|^2
+
\lambda_{\mathrm{TV}}\mathrm{TV}(x)
+
\frac{\lambda_2}{2}\|x\|^2,
\label{eq:brain_potential}
\end{equation}
where $\mathrm{TV}(x)$ denotes anisotropic total variation. This setting is representative of strongly convex nonsmooth imaging inverse problems for which the proximal operator of the full composite potential is not available in closed form.

Experiments are conducted on a BrainWeb T1-weighted MRI slice resized to $128\times128$. The BrainWeb data set is publicly available\footnote{\url{https://brainweb.bic.mni.mcgill.ca/brainweb/}}. The degradation operator is a spatially invariant $5\times5$ box blur implemented by FFT convolution with periodic boundary conditions. Gaussian noise is added to obtain a blurred signal-to-noise ratio (BSNR) of $40$ dB. All methods are initialized from the same Wiener deconvolution estimate. All experiments were implemented in Python 3.11 on a MacBook Pro with Apple M1 Pro and 32 GB RAM. 
Code is available at: \url{https://github.com/susanGhaderi/iPULA-BrainWeb}.

We compare iPULA against three Langevin-type baselines: (i) \emph{Grad-sub} from \cite{habring2024subgradient}; (ii) \emph{Prox-sub} from \cite{habring2024subgradient}; and (iii) MYULA from \cite{DurmusPereyra}. We use
\[
\lambda_{\mathrm{TV}} = 10^{-3},
\qquad
\lambda_2 = 10^{-2}.
\]
All methods are run for $5000$ Langevin iterations. For all the methods, we use the Moreau parameter $\gamma=10^{-6}$ and for iPULA and MYULA, we use Langevin step-size $\eta=0.4\times10^{-6}$. Reconstruction quality is measured using PSNR and SSIM, both computed with respect to the ground-truth image. Following common Bayesian imaging practice, PSNR and SSIM have been reported to provide the best MAP-like reconstruction along the trajectory.

\begin{figure}[hbpt!]
    \centering

    \begin{subfigure}[t]{0.36\linewidth}
        \centering
        \includegraphics[width=\linewidth]{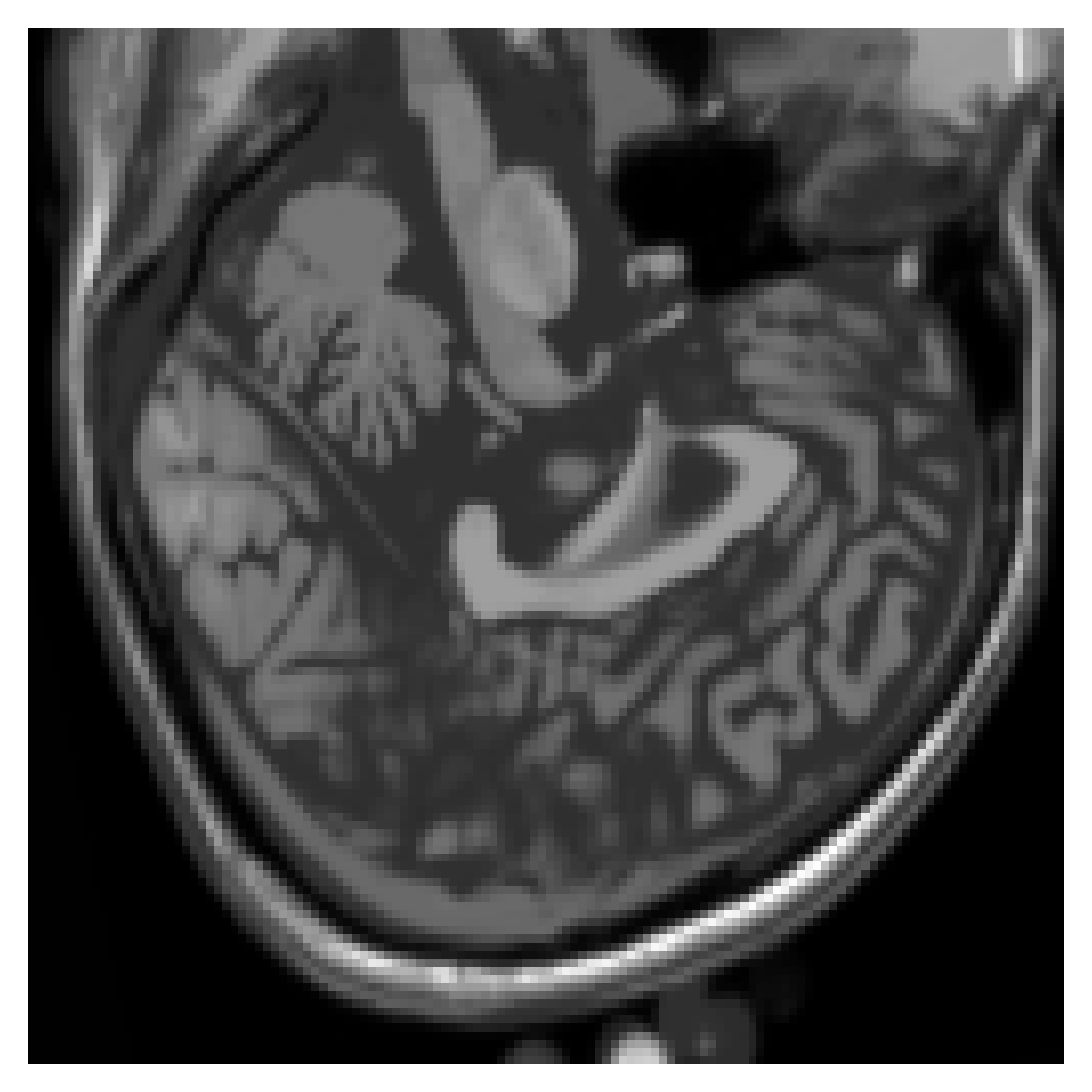}
        \caption{Ground truth.}
    \end{subfigure}\qquad\qquad
    \begin{subfigure}[t]{0.36\linewidth}
        \centering
        \includegraphics[width=\linewidth]{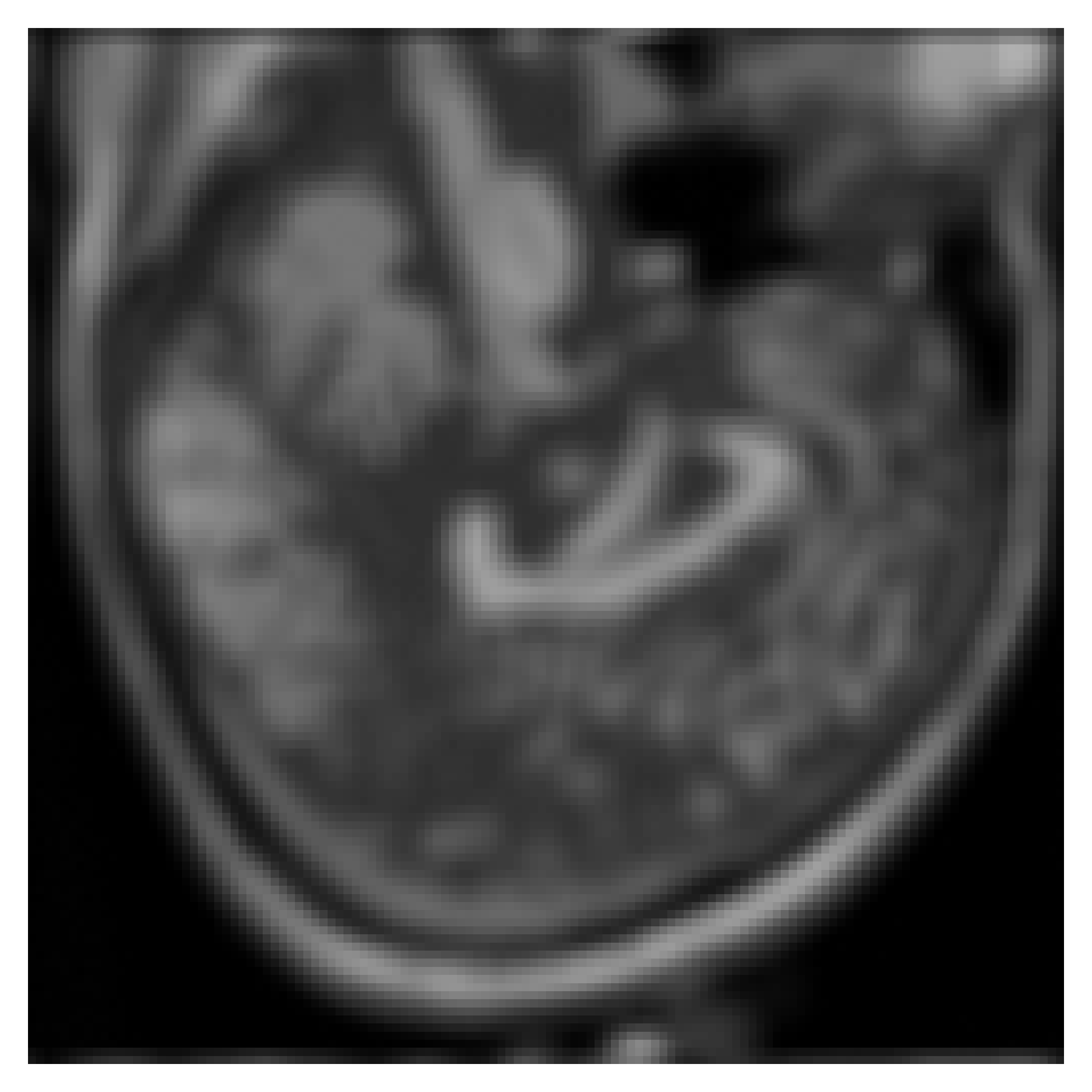}
        \caption{Blurred-noisy image.}
    \end{subfigure}\qquad\qquad
    \begin{subfigure}[t]{0.36\linewidth}
        \centering
        \includegraphics[width=\linewidth]{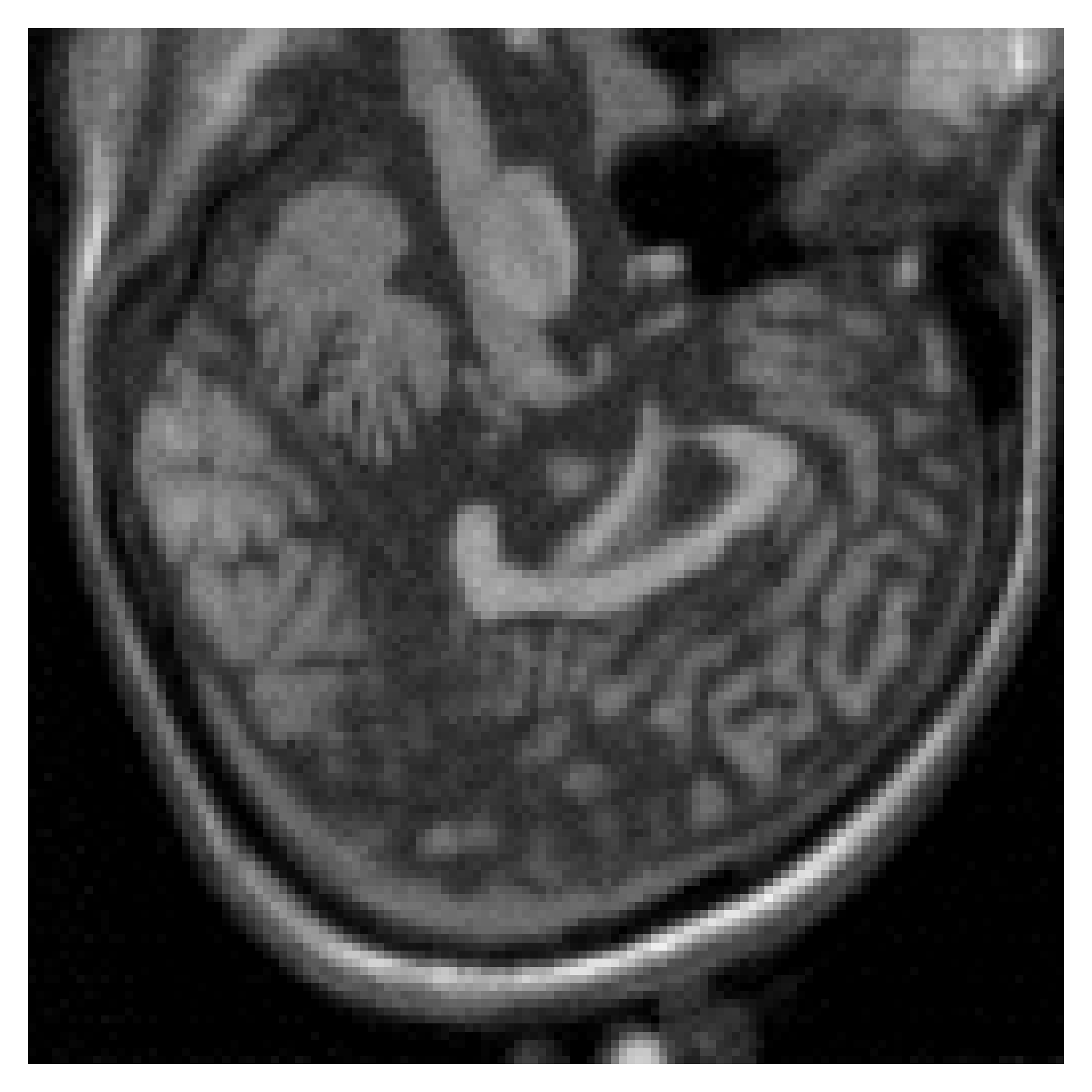}
        \caption{MYULA ($31.65$ dB).}
    \end{subfigure}\qquad\qquad
    \begin{subfigure}[t]{0.36\linewidth}
        \centering
        \includegraphics[width=\linewidth]{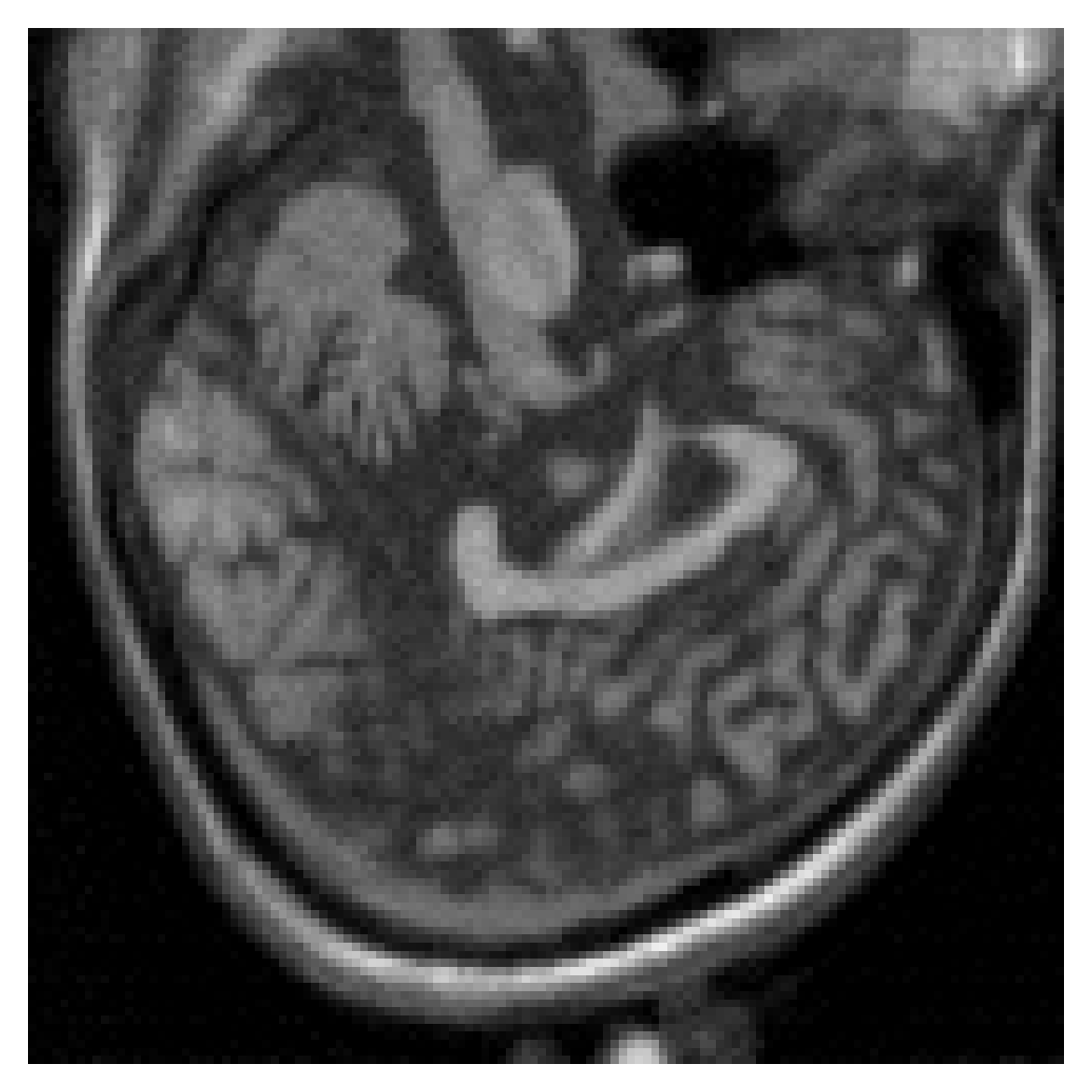}
        \caption{iPULA ($31.61$ dB).}
    \end{subfigure}\qquad\qquad
    \begin{subfigure}[t]{0.36\linewidth}
        \centering
        \includegraphics[width=\linewidth]{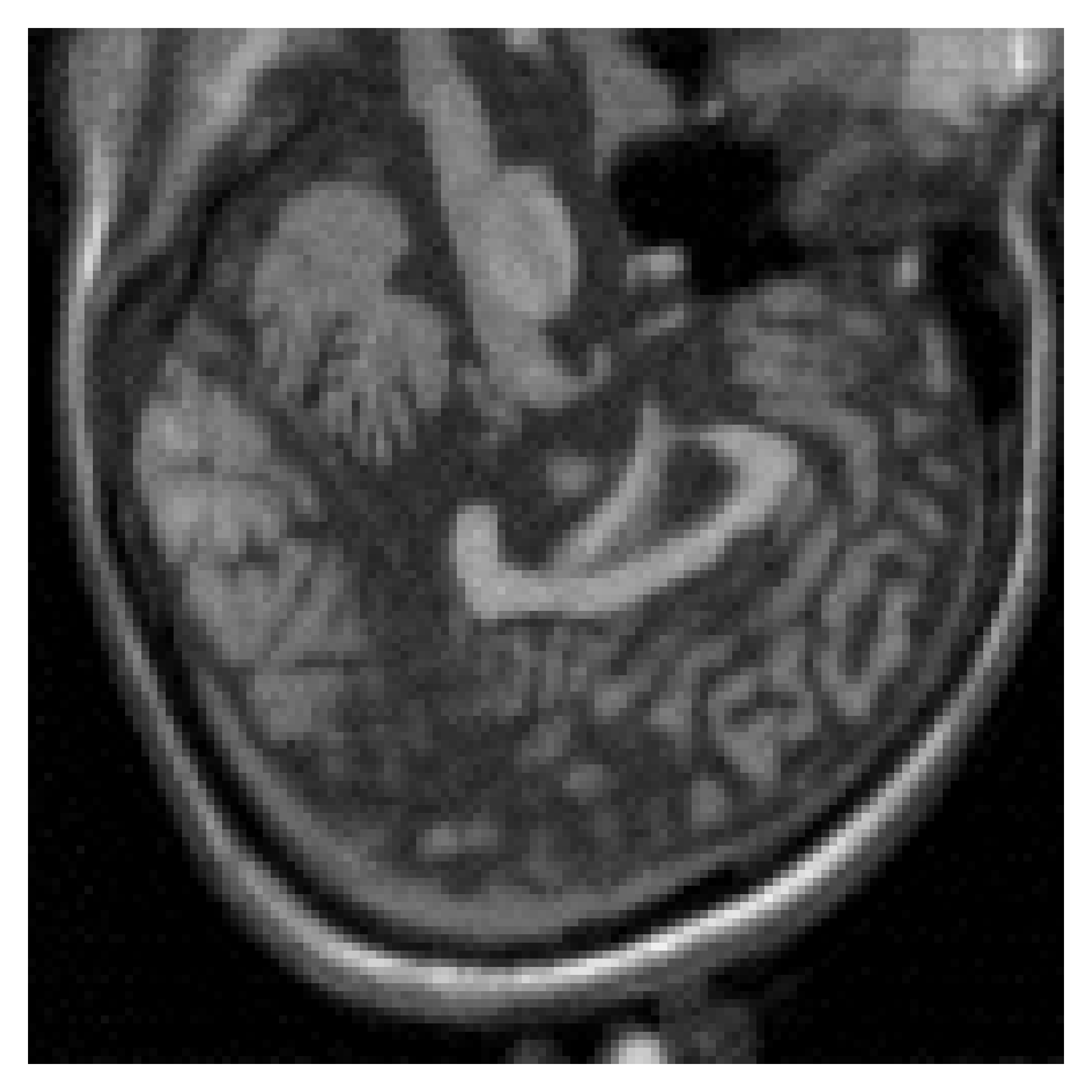}
        \caption{Grad-sub ($31.57$ dB).}
    \end{subfigure}\qquad\qquad
    \begin{subfigure}[t]{0.36\linewidth}
        \centering
        \includegraphics[width=\linewidth]{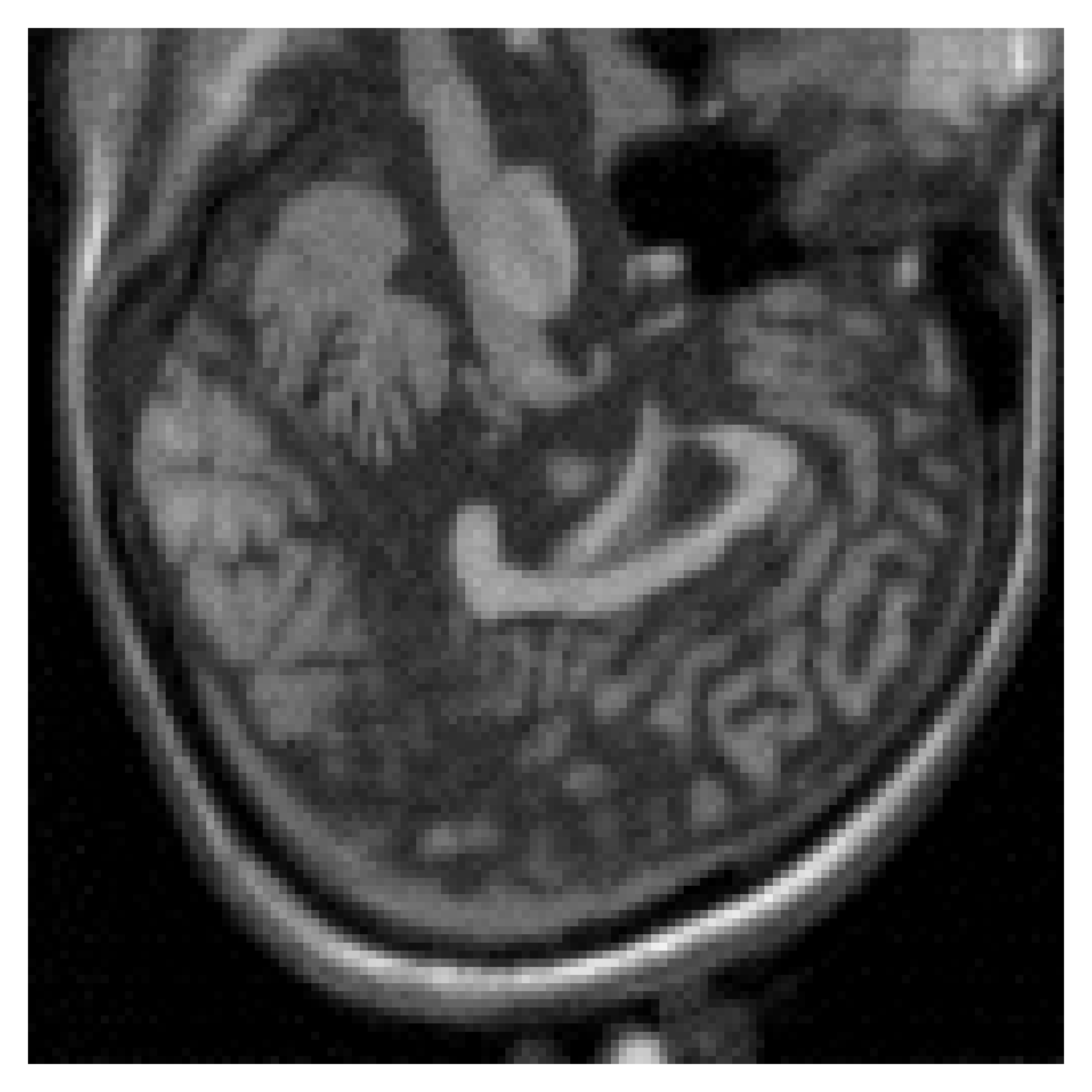}
        \caption{Prox-sub ($31.53$ dB).}
    \end{subfigure}

    \caption{
    Brain MRI deblurring results for MYULA, iPULA, Grad-sub, and Prox-sub.
    }
    \label{fig:brain_recons}
\end{figure}

Figure~\ref{fig:brain_recons} shows that all methods substantially improve over the blurred-noisy observation and recover the main anatomical structures of the MRI slice. The final reconstruction quality is highly competitive across methods: iPULA reaches $31.61$ dB, essentially matching Grad-sub ($31.57$ dB), Prox-sub ($31.53$ dB), and MYULA ($31.65$ dB). This confirms that the proposed inexact full-proximal Langevin update can achieve the same reconstruction quality as splitting-based Langevin baselines, despite using only an approximate solution of the full proximal subproblem.
\begin{figure}[hbpt!]
    \centering

    \begin{subfigure}[t]{0.38\linewidth}
        \includegraphics[width=\linewidth]{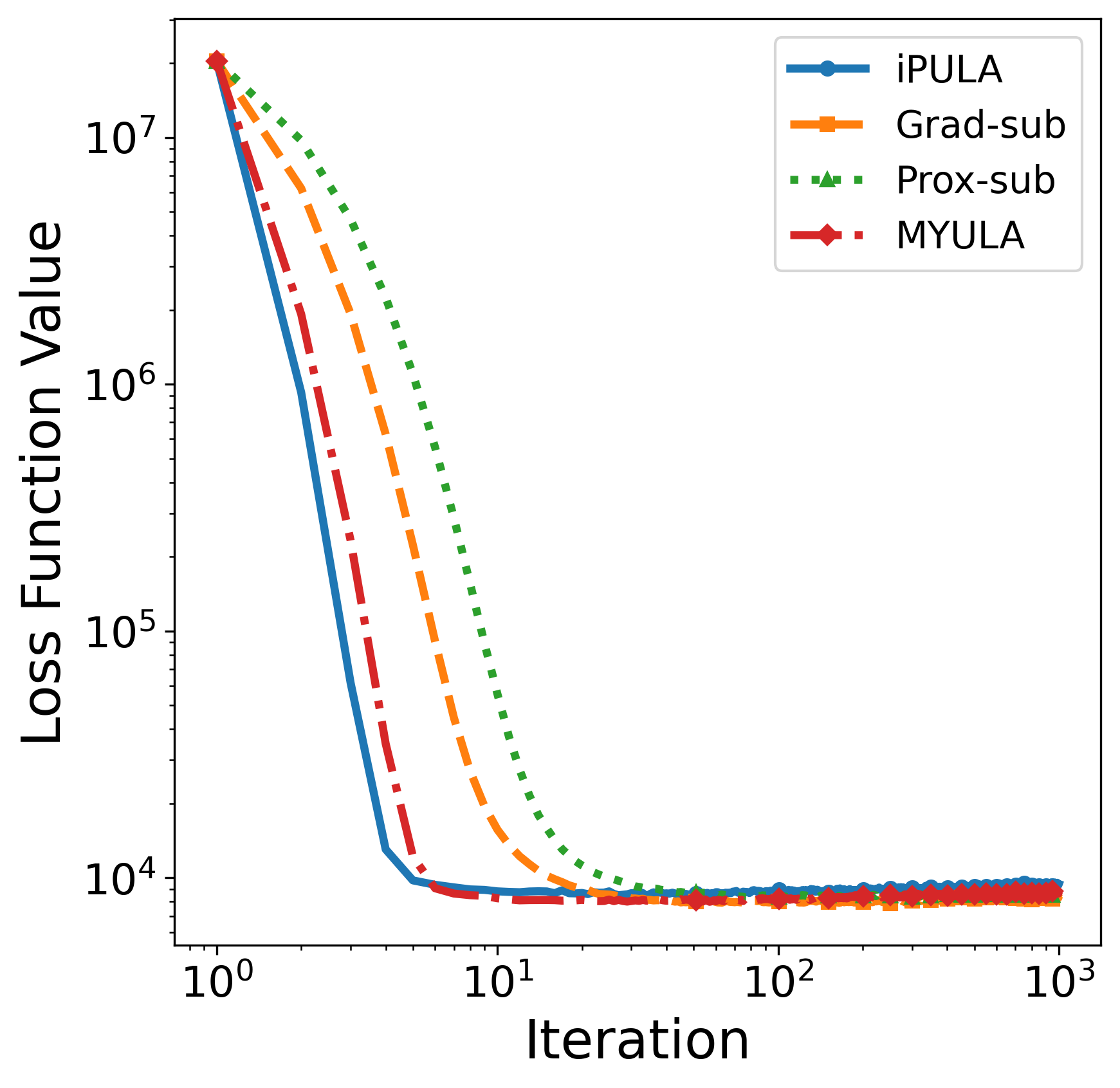}
        \caption{Objective value.}
    \end{subfigure}\qquad\qquad
    \begin{subfigure}[t]{0.4\linewidth}
        \includegraphics[width=\linewidth]{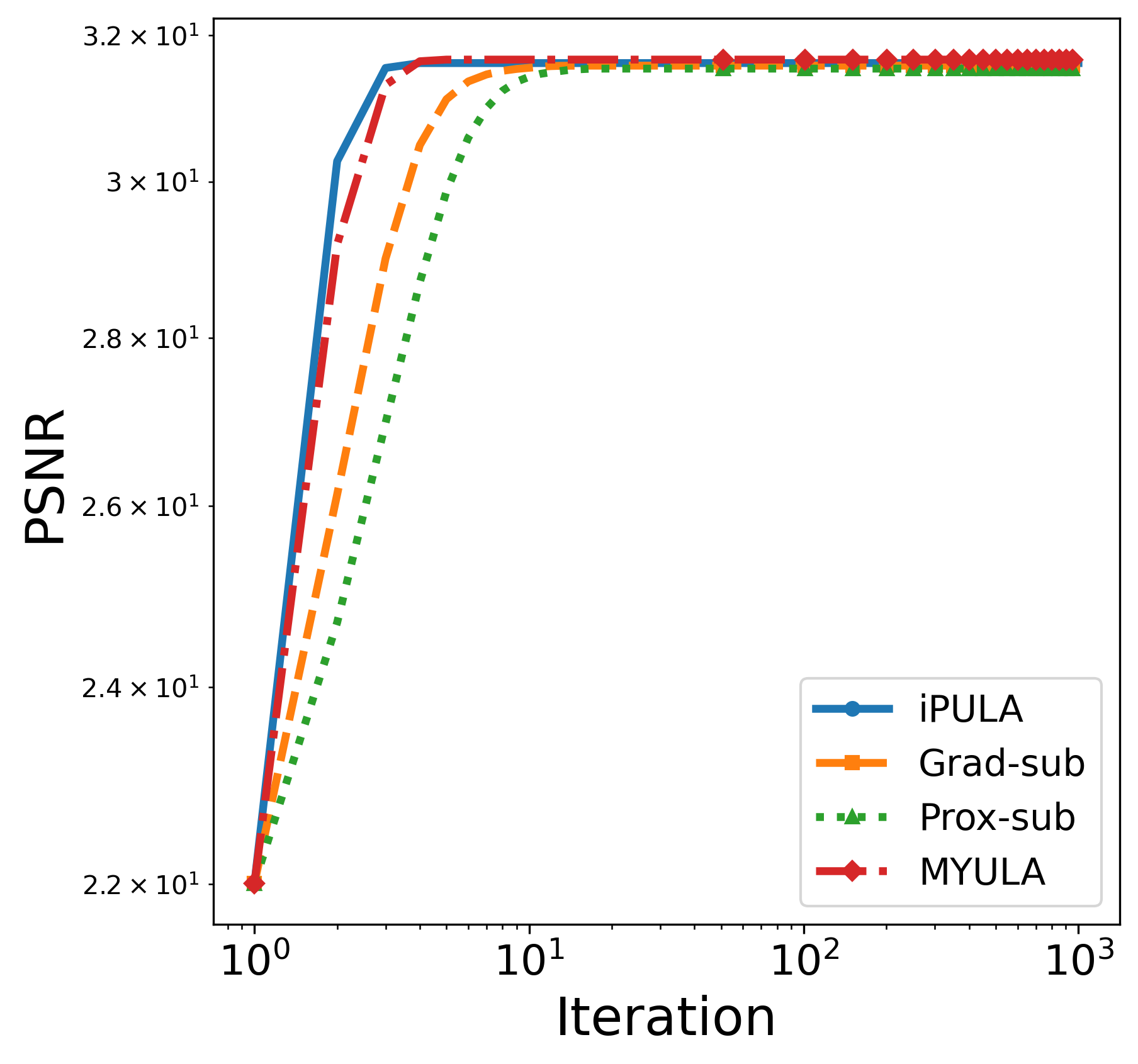}
        \caption{PSNR.}
    \end{subfigure}\qquad\qquad
    \begin{subfigure}[t]{0.4\linewidth}
        \includegraphics[width=\linewidth]{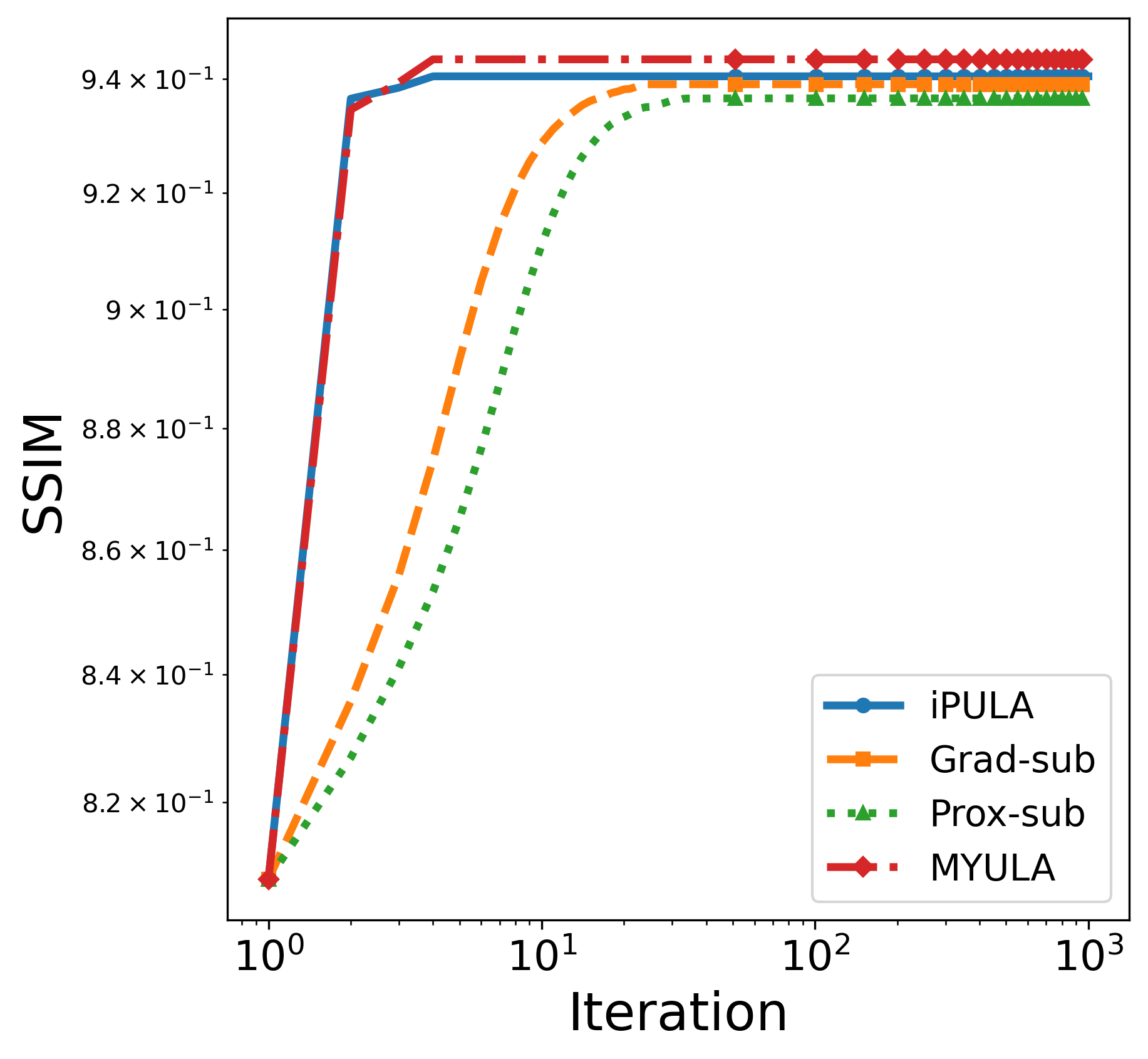}
        \caption{SSIM.}
    \end{subfigure}\qquad\qquad
    \begin{subfigure}[t]{0.38\linewidth}
        \centering
        \includegraphics[width=\linewidth]{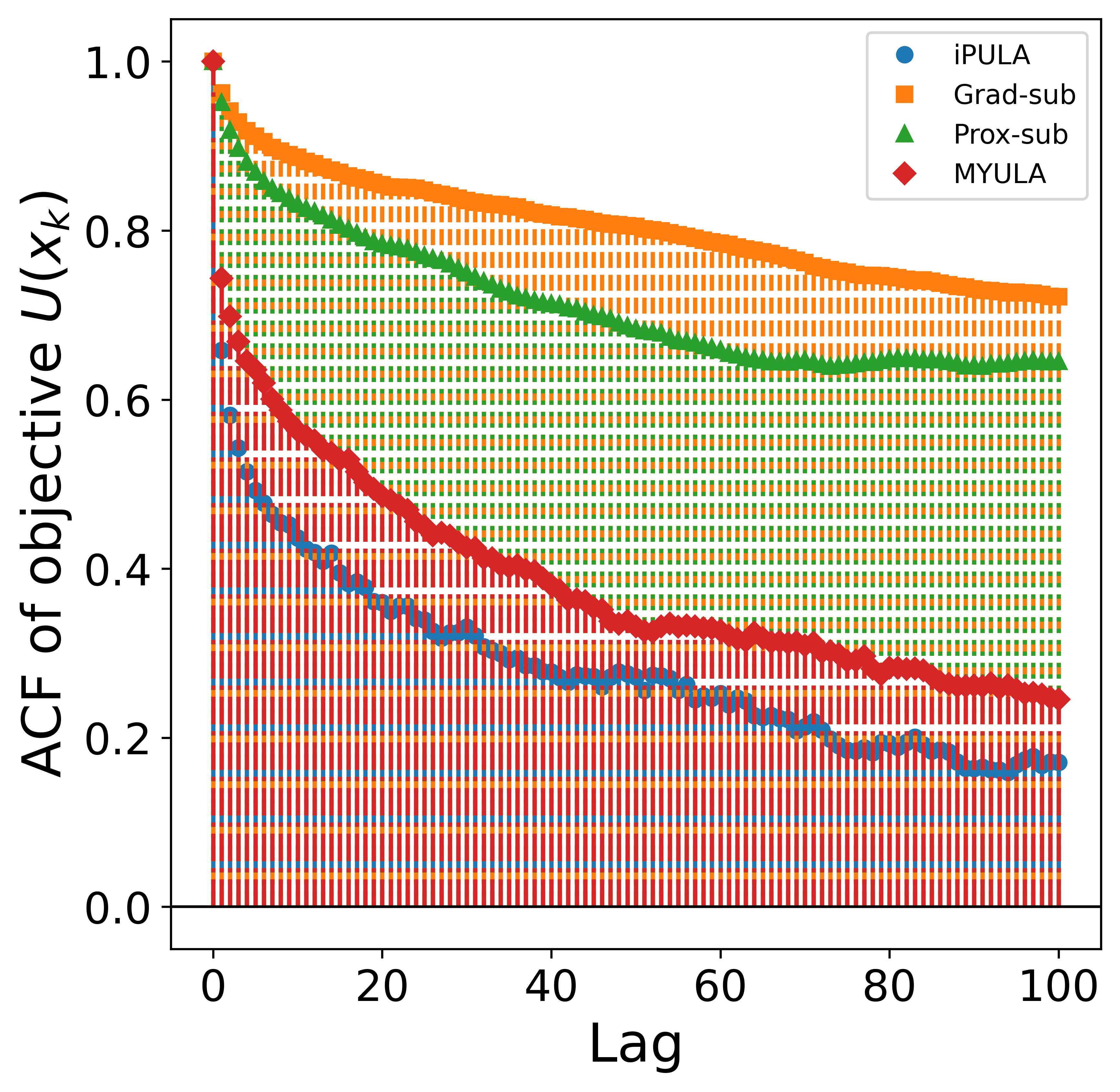}
        \caption{ACF.}
    \end{subfigure}

    \caption{
    Convergence and sampling behavior of iPULA, Grad-sub, Prox-sub, and MYULA on the BrainWeb deblurring problem.
    From left to right: objective value, PSNR, and  SSIM versus iteration number (log scale), and autocorrelation function (ACF) of the objective sequence.
    }
    \label{fig:brain_curves}
\end{figure}

Figure~\ref{fig:brain_curves} highlights the transient behavior of the methods. iPULA rapidly reaches the high-quality reconstruction regime in both PSNR and SSIM, with behavior comparable to MYULA and faster than Grad-sub and Prox-sub. The objective curve also shows that iPULA rapidly decreases the composite potential and stabilizes near the same objective range as competing methods. This is important because iPULA does not require an exact proximal operator: the full proximal step is computed only approximately by an inner projected subgradient procedure; see, e.g., \cite{mordukhovich2014easy}.

To assess sampling efficiency, Subfigure~\ref{fig:brain_curves}~(d) reports the autocorrelation function (ACF) of the post-burn-in objective sequence. The ACF of iPULA decays substantially faster than those of Grad-sub, Prox-sub, and MYULA, indicating a lower correlation between successive samples of the posterior energy. This provides evidence that the inexact full-proximal Moreau-gradient direction enhances posterior exploration in addition to improving reconstruction quality.

Overall, the results support the main premise of this work: proximal Langevin sampling with inexact proximal evaluations can remain stable and accurate even when the proximal operator of the full nonsmooth composite potential is unavailable in closed form. In this BrainWeb deblurring experiment, iPULA achieves a reconstruction quality comparable to strong splitting-based baselines while exhibiting favorable objective-level mixing behavior.

\section{Discussion}\label{sec:discussion}
In this work, we studied sampling from posterior distributions with nonsmooth composite potentials, where the proximal operator of the nonsmooth component was not available in closed form—a setting that arose frequently in Bayesian learning. To address this challenge, we computed the proximal operator approximately and constructed an inexact oracle for the Moreau envelope. Building on this, we proposed iPULA, an inexact proximal unadjusted Langevin algorithm that leveraged approximate gradients of the Moreau envelope, enabling efficient sampling in problems with complex nonsmooth structures. 
On the theoretical side, we showed that, under appropriately chosen inexactness criteria for the proximal subproblem, iPULA enjoyed non-asymptotic convergence guarantees. Empirically, experiments on medical image recovery with total variation regularization demonstrated the strong practical performance of iPULA, corroborating our theoretical findings.

An appealing feature of our framework is its flexibility with respect to the structure of the potentials, as it only requires the associated proximal subproblem to be solved efficiently. In our convex setting, this subproblem could be addressed using a variety of optimization techniques, including subgradient methods (see \cite{davis2018subgradient,mordukhovich2014easy,rahimi2024projected}); however, to the best of our knowledge, it remained unclear which approach was the most efficient for solving this subproblem in practice.

\section*{Acknowledgments}\label{sec:acknowledgnebts}
 The first author acknowledges the support of the Research Foundation Flanders (FWO) research project 12AK924N. MA was partially supported by the Research Foundation Flanders (FWO) research project G081222N and UA BOF DocPRO4 projects with ID 46929 and 48996.

\bibliographystyle{plain}

\end{document}